\newcommand{\GG}[1]{}
\theoremstyle{definition}
\newtheorem*{theorem*}{Theorem}
\newtheorem{theorem}{Theorem}
\newtheorem{lemma}{Lemma}
\newtheorem*{corollary*}{Corollary}
\def\ind{\begin{picture}(9,8)
         \put(0,0){\line(1,0){9}}
         \put(3,0){\line(0,1){8}}
         \put(6,0){\line(0,1){8}}
         \end{picture}
        }
    \DeclareMathOperator{\PP}{pr}
\def\RD{\textsc{RD}}
\def\RR{\textsc{RR}}
\def\P{\textnormal{pr}}
\def\true{\textnormal{true}}   
\def\BF{\textsc{BF}} 
\def\NDE{\textsc{NDE}}
\def\NIE{\textsc{NIE}}
\def\TE{\textsc{TE}}
\def\obs{\textnormal{obs}}
    \def\MR{\textsc{MR}}
    \def\HR{\textsc{HR}}
    \def\pa{\textnormal{pa}}    
    \def\S{Section}
\begin{document}
\doublespacing

\title{\bf Sharp sensitivity bounds for mediation under unmeasured mediator-outcome confounding}

\author{Peng Ding and Tyler J. VanderWeele\\
Department of Epidemiology, Harvard T. H. Chan School of Public Health, \\
Boston, Massachusetts 02115, U.S.A. \\
\url{pengdingpku@gmail.com} and \url{tvanderw@hsph.harvard.edu}}
\date{}
\maketitle

\begin{abstract}
It is often of interest to decompose a total effect of an exposure into the component that acts on the outcome through some mediator and the component that acts independently through other pathways. Said another way, we are interested in the direct and indirect effects of the exposure on the outcome. Even if the exposure is randomly assigned, it is often infeasible to randomize the mediator, leaving the mediator-outcome confounding not fully controlled. We develop a sensitivity analysis technique that can bound the direct and indirect effects without  parametric assumptions about the unmeasured mediator-outcome confounding. 

\noindent {\bf Keywords:} Bounding factor; Causal inference; Collider; Natural direct effect; Natural indirect effect.
\end{abstract}

\section{Introduction}

Researchers often conduct mediation analysis to assess the extent to which an effect is mediated through some particular pathway and to which the effect of an exposure on the outcome operates directly. Mediation analysis initially developed within genetics and psychology based on linear structural equation models \citep{Wright::1934, Baron::1986}, and has been formalized by the notions of natural direct and indirect effects under the potential outcomes framework \citep{Robins::1992, Pearl::2001} and the decision-theoretic framework \citep{Didelez::2006, Geneletti::2007}. However, identification of natural direct and indirect effects used in that literature relies on strong assumptions, including that of no unmeasured mediator-outcome confounding \citep{Pearl::2001, Vanderweele::2010, Imai::2010}. Even if we can rule out unmeasured exposure-mediator and exposure-outcome confounding by randomly assigning the exposure, full control of mediator-outcome confounding is often impossible because it is infeasible to randomize the mediator. Therefore, it is crucial in applied mediation analyses to investigate the sensitivity of the conclusions with respect to unmeasured mediator-outcome confounding. Previous sensitivity analysis techniques either rely on restrictive modeling assumptions \citep{Imai::2010}, 
or require specifying a large number of sensitivity parameters \citep{Vanderweele::2010}. Other literature \citep{Sjolander::2009, Robins::2010} provides bounds for natural direct and indirect effects without imposing assumptions, but these consider the most extreme scenarios and are often too wide to be useful in practice. We develop a sensitivity analysis technique which requires specifying only two sensitivity parameters, without any modeling assumptions or any assumptions about the type of the unmeasured mediator-outcome confounder or confounders.
Our results imply Cornfield-type inequalities \citep{Cornfield::1959, Ding::2014} that the unmeasured confounder must satisfy to reduce the observed natural direct effect to a certain level or explain it away.

\section{Notation and Framework for Mediation Analysis}
\label{sec::notation}

Let $A$ denote the exposure, $Y$ the outcome, $M$ the mediator, $C$ a set of observed baseline covariates not affected by the exposure, and $U$ a set of unmeasured baseline covariates not affected by the exposure. In order to define causal effects, we invoke the potential outcomes framework \citep{Neyman::1923, Rubin::1974} and apply this in the context of mediation \citep{Robins::1992, Pearl::2001}. If a hypothetical intervention on $A$ is well-defined, we let $Y_a$ and $M_a$ denote the potential values of the outcome and the mediator that would have been observed had the exposure $A$ been set to level $a$. If hypothetical interventions on $A$ and $M$ are both well-defined, we further let $Y_{am}$ denote the potential value of the outcome that would have been observed had the exposure $A$ been set to level $a$, and had the mediator $M$ been set to level $m$ \citep{Robins::1992, Pearl::2001}. Following \cite{Pearl::2009} and \cite{Vanderweele::2015}, we need the consistency assumption: $Y_a=Y, M_a=M$ if $A=a$; $Y_{am}=Y$ if $A=a$ and $M=m$, for all $a,m.$ 
We further need the composition assumption that $Y_{aM_a}=Y_a$ for $a=0,1$.

We will assume that the exposure $A$ is binary, but all of the results in this paper are also applicable to a categorical or continuous exposure and could be applied comparing any two levels of $A$. In the main text, we consider a binary outcome $Y$, but in \S \ref{sec::discussion} we note that all the results hold for count and continuous positive outcomes and time-to-event outcomes with rare events. The mediator $M$, the observed covariates $C$, and the unmeasured confounder or confounders $U$, can be of general types, i.e., categorical, continuous, or mixed; scalar or vectors. For notational simplicity, in the main text we assume that $(M,C,U)$ are categorical, and in the Supplementary Material we present results for general types.

On the risk ratio scale, the conditional natural direct and indirect effect, comparing the exposure levels $A=1$ and $A=0$ within the observed covariate level $C=c$, are defined as 
\begin{eqnarray}
\label{def::rr}
\NDE_{\RR|c}^\true = \frac{  \P(  Y_{1M_0} = 1 \mid c ) } {  \P(Y_{0M_0} = 1 \mid c) },\quad 
\NIE_{\RR|c}^\true = {  \P(  Y_{1M_1}=1\mid c )  \over \P( Y_{1M_0}=1 \mid c ) }.
\end{eqnarray}
The conditional natural direct effect compares the distributions of the potential outcomes, when the exposure level changes from $A=0$ to $A=1$ but the mediator is fixed at $M_0.$ The conditional natural indirect effect compares the distributions of the potential outcomes, when the exposure level is fixed at level $A=1$ but the mediator changes from $M_0$ to $M_1.$ The conditional total effect can be decomposed as a product of the conditional direct and indirect effects as follows:
\begin{eqnarray*}
\TE_{\RR|c}^\true 
= \frac{  \P(  Y_{1} = 1 \mid c ) } {  \P(Y_{0} = 1 \mid c) }
= \NDE_{\RR|c}^\true \times \NIE_{\RR|c}^\true.
\label{eq::decompose-rr}
\end{eqnarray*}

On the risk difference scale, the conditional natural direct and indirect effect are defined as 
\begin{eqnarray}
\NDE_{\RD|c}^\true &=& \P(  Y_{1M_0} = 1 \mid c ) - \P(Y_{0M_0} = 1\mid c),\\
\NIE_{\RD|c}^\true &=& \P(  Y_{1M_1}=1\mid c )  - \P( Y_{1M_0}=1 \mid c ),
\label{def::rd}
\end{eqnarray} 
and the conditional total effect has the decomposition
\begin{eqnarray*}
\TE_{\RD|c}^\true = \P(  Y_{1} = 1 \mid c )  -   \P(Y_{0} = 1 \mid c)= \NDE_{\RD|c}^\true + \NIE_{\RD|c}^\true . 
\label{eq::decompose-rd}
\end{eqnarray*}

\section{Identification of Conditional Natural Direct and Indirect Effects}
\label{sec::identification}

Here we follow  \citet{Pearl::2001}'s identification strategy for natural direct and indirect effects. A number of authors have provided other subtly different sufficient conditions \citep{Imai::2010, Vansteelandt::2012, Lendle::2013}.
Let $\ind$ denote independence of random variables.
To identify the conditional natural direct and indirect effects by the joint distribution of the observed variables $(A,M,Y,C)$, \citet{Pearl::2001} assumes that for all $a,a^*$ and $m$,
\begin{eqnarray}\label{eq::pearl-assume}
Y_{am} \ind A \mid  C,\quad
Y_{am} \ind M \mid  (A, C),\quad
M_a\ind A \mid C,\quad 
Y_{am}\ind M_{a^*}\mid C.
\end{eqnarray}
The four assumptions in (\ref{eq::pearl-assume}) require that the observed covariates $C$ control exposure-outcome confounding, control mediator-outcome confounding, control exposure-mediator confounding, and ensure cross-world counterfactual independence, respectively. In particular, on the risk ratio scale, we can identify the conditional natural direct and indirect effects by
\begin{eqnarray} 
\NDE^\obs_{\RR|c} &=&{ \sum_m   \P(Y=1\mid A=1,m,c) \P(m\mid A=0,c)  \over  \sum_m  \P(Y=1\mid A=0,m,c)  \P(m\mid A=0,c) } , 
\label{eq::identi-rr-nde}\\
\NIE_{\RR|c}^\obs &=&   
{
\sum_m \P(Y=1\mid A=1,m,c)\P(m\mid A=1,c)
\over \sum_m \P(Y=1\mid A=1,m,c)\P(m\mid A=0,c)
}.
\label{eq::identi-rr-nie}
\end{eqnarray}
On the risk difference scale, we can identify the conditional natural direct and indirect effects by
\begin{eqnarray}
\NDE^\obs_{\RD|c} &=& \sum_m \{  \P(Y=1\mid A=1,m,c)  - \P(Y=1\mid A=0,m,c) \} \P(m\mid A=0,c) ,
\label{eq::identi-rd-nde}\\
\NIE_{\RD|c}^\obs &=& \sum_m \P(Y=1\mid A=1,m,c) \{  \P(m\mid A=1,c) -\P(m\mid A=0,c) \}.
\label{eq::identi-rd-nie}
\end{eqnarray}
Proofs of (\ref{eq::identi-rr-nde})--(\ref{eq::identi-rd-nie}) may be found in \citet{Pearl::2001} and \citet{Vanderweele::2015}.

If we replace $Y_{aM_{a^*}}$ in the definitions \eqref{def::rr}--\eqref{def::rd} by $Y_{a,G_{a^*|c}}$, with $G_{a^*|c}$ a random draw from the conditional distribution $\P(M_{a^*}\mid c)$, then we can drop the cross-world counterfactual independence assumption $Y_{am}\ind M_{a^*}\mid C$ \citep{Vanderweele::2015}. This view is related to the decision-theoretic framework without using potential outcomes \citep{Didelez::2006, Geneletti::2007}. We show in the Supplementary Material that because the alternative frameworks lead to the same empirical identification formulas in (\ref{eq::identi-rr-nde})--(\ref{eq::identi-rd-nie}), all our results below can be applied.

\section{Sensitivity Analysis With Unmeasured Mediator-Outcome Confounding}
\label{sec::sensitivity}

\subsection{Unmeasured Mediator-Outcome Confounding}
Assumptions in (\ref{eq::pearl-assume}) are strong and untestable. If the exposure is randomly assigned given the values of the observed covariates $C$, as in completely randomized experiments or randomized block experiments, then the first and third assumptions (\ref{eq::pearl-assume}) hold automatically owing to the randomization. In observational studies, we may have background knowledge to collect adequate covariates $C$ to control the exposure-outcome and exposure-mediator confounding such that the first and third assumptions in (\ref{eq::pearl-assume}) are plausible. However, direct intervention on the mediator is often infeasible, and it may not be possible to randomize. Therefore, the second assumption in (\ref{eq::pearl-assume}), the absence of mediator-outcome confounding, may be violated in practice. Furthermore, the fourth assumption in (\ref{eq::pearl-assume}) cannot be guaranteed even under randomization of both $A$ and $M$, and thus it is fundamentally untestable \citep{Robins::2010}.

For sensitivity analysis, we assume that $(C, U)$ jointly ensure (\ref{eq::pearl-assume}), i.e.,
\begin{eqnarray}\label{eq::pearl-assume-U}
Y_{am} \ind A \mid  (C,U),\quad 
Y_{am}\ind M\mid (A, C, U),\quad 
M_a\ind A \mid (C,U),\quad 
Y_{am}\ind M_{a^*}\mid (C, U) .
\end{eqnarray}
When $C$ control the exposure-mediator and exposure-outcome confounding, we further assume
\begin{eqnarray}
\label{eq::randomization-treatment}
A\ind U\mid C.
\end{eqnarray}
The independence relationships in \eqref{eq::pearl-assume-U} impose no restrictions on the unmeasured confounders $U$, and they become assumptions if we require at least one of the sensitivity parameters introduced in \S \ref{sec::sensitivityparameters} be finite.
Figure \ref{fg::DAG} illustrates such a scenario with the assumptions in (\ref{eq::pearl-assume-U}) and (\ref{eq::randomization-treatment}) holding, where $U$ contains the common causes of the mediator and the outcome, and $A$ and $U$ are conditionally independent given $C.$ 
In \S \ref{sec::discussion} and the Supplementary Material, we comment on the applicability of our results under violations of the assumption in (\ref{eq::randomization-treatment}).

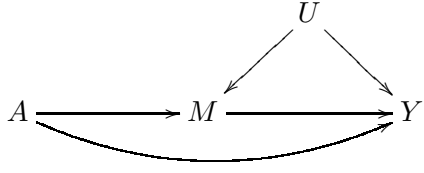
\begin{figure}[t]
\centering
$$
\begin{xy}
\xymatrix{
& & & U \ar[dl] \ar[dr] \\
A \ar[rr] \ar@/_1.5pc/[rrrr] & & M\ar[rr] & & Y}
\end{xy}
$$
\caption{Directed acyclic graph with mediator-outcome confounding within strata of observed covariates $C$.}\label{fg::DAG}
\end{figure}

Under the assumptions in (\ref{eq::pearl-assume-U}) and (\ref{eq::randomization-treatment}), we can express conditional natural direct and indirect effects using the joint distribution of $(A,M,Y,C,U)$. In particular, on the risk ratio scale, 
\begin{eqnarray} 
\NDE^\true_{\RR|c} &=&{ \sum_u \sum_m   \P(Y=1\mid A=1,m,c,u) \P(m\mid A=0,c,u) \P(u\mid c) 
\over  
\sum_u \sum_m  \P(Y=1\mid A=0,m,c,u)  \P(m\mid A=0,c,u)  \P(u\mid c)}, 
\label{eq::identi-rr-nde-u}\\
\NIE_{\RR|c}^\true &=&   
{
\sum_u \sum_m \P(Y=1\mid A=1,m,c,u)\P(m\mid A=1,c,u) \P(u\mid c)
\over 
\sum_u \sum_m \P(Y=1\mid A=1,m,c,u)\P(m\mid A=0,c,u) \P(u\mid c)
}.
\label{eq::identi-rr-nie-u}
\end{eqnarray}
On the risk difference scale,  
\begin{eqnarray}
\NDE^\true_{\RD|c} &= \sum_u \sum_m& \{  \P(Y=1\mid A=1,m,c,u)  - \P(Y=1\mid A=0,m,c,u) \} \nonumber \\
&&\times  \P(m\mid A=0,c,u) \P(u\mid c) , 
\label{eq::identi-rd-nde-u}\\
\NIE^\true_{\RD|c} &=\sum_u \sum_m& \P(Y=1\mid A=1,m,c,u)  \P(u\mid c) \nonumber \\
&&\times \{  \P(m\mid A=1,c,u) -\P(m\mid A=0,c,u) \}.
\label{eq::identi-rd-nie-u}
\end{eqnarray}
The proofs of (\ref{eq::identi-rr-nde-u})--(\ref{eq::identi-rd-nie-u}) follow from \citet{Pearl::2001} and \citet{Vanderweele::2015}. Unfortunately, however, the above four formulae about the conditional direct and indirect effects depend not only on the joint distribution of the observed variables $(A,M,Y,C)$ but also on the distribution of the unobserved variable $U$. In the following, we will give sharp bounds on the true conditional direct and indirect effects by the observed conditional natural direct and indirect effects and two measures of the mediator-outcome confounding that can be taken as sensitivity parameters.

\subsection{Sensitivity Parameters and the Bounding Factor}\label{sec::sensitivityparameters}

First, we introduce a conditional association measure between $U$ and $Y$ given $(A=1, M, C=c)$, and define our first sensitivity parameter as
$$
\RR_{UY|(A=1,M,c)} 
= \max_m \RR_{UY|(A=1,m,c)}
= \max_m
{ \max_{u} \P(Y=1\mid A=1,m,c,u)
\over 
\min_{u} \P(Y=1\mid A=1,m,c,u)
} ,
$$
where $ \RR_{UY|(A=1,m,c)}$ is the maximum divided by the minimum of the probabilities $\P(Y=1\mid A=1, m,c,u)$ over $u$. When $U$ is binary, $\RR_{UY|(A=1,m,c)}$ reduces to the usual conditional risk ratio of $U$ on $Y$, and $\RR_{UY|(A=1,M,c)} $ is the maximum of these conditional risk ratios over $m.$ If $U$ and $ Y$ are conditionally independent given $ (A,M,C)$, then $\RR_{UY|(A=1,M,c)}=1.$

Second, we introduce a conditional association measure between $A$ and $U$ given $M$. As illustrated in Figure \ref{fg::DAG}, although $A\ind U\mid C$, an association between $A$ and $U$ conditional on $M$ arises by conditioning on the common descendant $M$ of $A$ and $U$, also called the collider bias. Our second sensitivity parameter will assess the magnitude of this association generated by collider bias. We define our second sensitivity parameter as
\begin{eqnarray}
\label{eq::def2-1}
\RR_{AU|(M,c)}  = \max_m \RR_{AU|(m,c)}  
= \max_m \max_{u} {\P(u\mid A=1, m, c)  \over  \P( u\mid A=0,m,c  )}
,
\end{eqnarray}
where $\RR_{AU|(m,c)}$ is the maximum of the risk ratio of $A$ on $U$ taking value $u$ given $M=m$ and $C=c$. When $U$ is binary, $\RR_{AU|(m,c)}$ reduces to the usual conditional risk ratio of $A$ on $U$ given $M=m$ and $C=c$. The second sensitivity parameter can be viewed as the maximum of the collider bias ratios conditioning over stratum $M=m$. We give an alternative form 
\begin{eqnarray}
\label{eq::def2-2}
\RR_{AU|(m,c)} 
=  \max_u  \frac{   \P(m\mid A=1, c,u)  }{\P(m\mid A=0, c, u)} \Big/ 
 \frac{   \P(m\mid A=1, c)  }{\P(m\mid A=0, c)} ,
\end{eqnarray}
which is the maximum conditional relative risk of $A$ on $M=m$ within stratum $U=u$ divided by the unconditional relative risk of $A$ on $M=m$. The relative risk unconditional on $U$ is identifiable from the observed data, and therefore the second sensitivity parameter depends crucially on the relative risk conditional on $U$.

Nonparametrically, we can specify the second sensitivity parameter using expression \eqref{eq::def2-1} or \eqref{eq::def2-2}. If we would like to impose parametric assumptions, e.g., $\P(m\mid a,c,u)$ follows a log-linear model, then it reduces to a function of the regression coefficients, which will depend explicitly on the $A$-$M$ and $U$-$M$ associations, as shown in the Supplementary Material.

To aid interpretation, Lemma A.3 in the Supplementary Material shows that
$$
\RR_{AU|(m,c)} 
\leq
\max_{u\neq u'} \frac{ \PP(m\mid A=1,c,u) \PP(m\mid A=0,c, u')    }{ \PP(m\mid A=0,c,u) \PP(m\mid A=1, c,u')  },
$$
which measures the interaction of $A$ and $U$ on $M$ taking value $m$ given $C=c$ on the risk ratio scale \citep{Piegorsch::1994, Yang::1999}.

To further aid specification of this second parameter we note that \citet{Greenland::2003} showed that, depending on the magnitude of the association, in most but not all settings, the magnitude of the ratio measure association relating $A$ and $U$ introduced by conditioning on $M$ is smaller than the ratios relating $A$ and $M$, and relating $U$ and $M$. Thus the lower of these two ratios can help specifying the second parameter. In particular, when the exposure is weakly associated with the mediator, the collider bias is small. If $A\ind M\mid C$, then the collider bias is zero, i.e., $\RR_{AU|(M,c)} =1.$

Finally, we introduce the bounding factor
\begin{eqnarray*}
\BF_{U|(M,c)} = 
\frac{ \RR_{AU|(M,c)}  \times \RR_{UY|(A=1,M,c)}    }
{  \RR_{AU|(M,c)}  +  \RR_{UY|(A=1,M,c)}   - 1},
\label{eq::bounding-factor}
\end{eqnarray*}
which is symmetric and monotone in both $\RR_{AU|(M,c)}$ and $\RR_{UY|(A=1,M,c)}$, and it
is no larger than either sensitivity parameter. If one of the sensitivity parameters equals unity, then the bounding factor also equals unity.
The bounding factor, a measure of the strength of unmeasured mediator-outcome confounding, plays a central role in bounding the natural direct and indirect effects in the following theorems.

\subsection{Bounding Natural Direct and Indirect Effects on the Risk Ratio Scale}

\begin{theorem}
\label{thm::NDE-RR}
Under the assumptions in (\ref{eq::pearl-assume-U}) and (\ref{eq::randomization-treatment}), the true conditional natural direct effect on the risk ratio scale has the sharp bound
$
 \NDE^\true_{\RR|c}  \geq   \NDE^\obs_{\RR|c}/  \BF_{U|(M,c)}.
$
\end{theorem}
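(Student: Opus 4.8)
The plan is to reduce the claim to a single scalar inequality per mediator level and then invoke a covariance-type bounding argument. First I would observe that the denominators of $\NDE^\true_{\RR|c}$ in \eqref{eq::identi-rr-nde-u} and of $\NDE^\obs_{\RR|c}$ in \eqref{eq::identi-rr-nde} in fact coincide. Using $A\ind U\mid C$ from \eqref{eq::randomization-treatment}, we have $\P(u\mid c)=\P(u\mid A=0,c)$, so that $\P(Y=1\mid A=0,m,c,u)\,\P(m\mid A=0,c,u)\,\P(u\mid c)=\P(Y=1,m,u\mid A=0,c)$; summing over $u$ and $m$ shows both denominators equal $\sum_m \P(Y=1,m\mid A=0,c)$. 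Consequently the theorem reduces to the numerator inequality $\sum_m \P(Y=1\mid A=1,m,c)\,\P(m\mid A=0,c)\le \BF_{U|(M,c)}\sum_u\sum_m \P(Y=1\mid A=1,m,c,u)\,\P(m\mid A=0,c,u)\,\P(u\mid c)$.

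Next I would split both numerators by mediator level $m$ and rewrite each $m$-summand as a probability-weighted average over $U$. Writing $\pi_1(u)=\P(u\mid A=1,m,c)$ and $\pi_0(u)=\P(u\mid A=0,m,c)$, the same use of $A\ind U\mid C$ gives $\P(m\mid A=0,c,u)\,\P(u\mid c)=\P(m\mid A=0,c)\,\pi_0(u)$, so the true $m$-summand equals $\P(m\mid A=0,c)\sum_u \P(Y=1\mid A=1,m,c,u)\,\pi_0(u)$, while the observed $m$-summand equals $\P(m\mid A=0,c)\sum_u \P(Y=1\mid A=1,m,c,u)\,\pi_1(u)$. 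After cancelling the common factor $\P(m\mid A=0,c)$, the per-$m$ target becomes the ratio of two weighted averages of the common values $p(u):=\P(Y=1\mid A=1,m,c,u)$ under the weight distributions $\pi_1$ and $\pi_0$, whose likelihood ratio $\pi_1(u)/\pi_0(u)$ has maximum $\RR_{AU|(m,c)}$ and whose value spread $\max_u p(u)/\min_u p(u)$ is $\RR_{UY|(A=1,m,c)}$.

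The technical heart, which I expect to be the main obstacle, is the scalar lemma that for any probability distributions $\pi_1,\pi_0$ and nonnegative values $p(u)$, $\sum_u p(u)\pi_1(u)\big/\sum_u p(u)\pi_0(u)\le \gamma g/(\gamma+g-1)$, where $\gamma=\max_u \pi_1(u)/\pi_0(u)$ and $g=\max_u p(u)/\min_u p(u)$. I would prove this by setting $r(u)=\pi_1(u)/\pi_0(u)$, so that the numerator equals $\E_{\pi_0}[p\,r]=\E_{\pi_0}[p]+\mathrm{Cov}_{\pi_0}(p,r)$ subject to $0\le r\le\gamma$, $\E_{\pi_0}[r]=1$, and $p$ confined to an interval whose endpoints differ by the factor $g$; the supremum of the ratio over all admissible configurations is attained at a two-point distribution that places $r=\gamma$ on the level where $p$ is largest, and a direct computation shows the optimum equals $\gamma g/(\gamma+g-1)$, i.e.\ the bounding factor evaluated at the level-$m$ parameters.

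Finally I would assemble the pieces: the per-$m$ bounding factor is monotone increasing in both $\RR_{AU|(m,c)}$ and $\RR_{UY|(A=1,m,c)}$, so replacing these by their maxima over $m$, which are the parameters defining $\BF_{U|(M,c)}$, can only enlarge it; bounding every $m$-summand of the observed numerator by $\BF_{U|(M,c)}$ times the corresponding true summand and summing over $m$ yields the numerator inequality, and dividing by the common denominator gives the stated bound. Sharpness I would argue by exhibiting a binary $U$ realizing the two-point optimum of the lemma while remaining consistent with the observed law of $(A,M,Y,C)$ and with the prescribed values of the two sensitivity parameters, so that equality is attained.
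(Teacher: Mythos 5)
Your proposal is correct and follows essentially the same route as the paper's proof: after cancelling the common $A=0$ denominator you reduce to the per-$m$ ratio of $\pi_1$- and $\pi_0$-weighted averages of $p(u)=\P(Y=1\mid A=1,m,c,u)$, your scalar lemma is exactly the paper's Lemma A.3 with the same extremal two-point configuration (the paper proves it via a convex-combination reparametrization and monotonicity of $g(x,y)=xy/(x+y-1)$ rather than your covariance/extreme-point phrasing, but the content is identical), and the assembly by maximizing the per-$m$ bounding factor over $m$ matches the paper's argument. The only differences are cosmetic --- you invoke the displayed $U$-conditional identification formula rather than rederiving it from the counterfactual assumptions, and for sharpness the paper additionally notes that $\P(m\mid A=0,c)$ must be degenerate at the $m$ attaining the maximal per-$m$ bounding factor, a condition your binary-$U$ construction should make explicit.
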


The sharp bound is attainable when $U$ is binary, $\P(m\mid A=0,c)$ is degenerate, and some other conditions hold as discussed in the Supplementary Material.
Theorem \ref{thm::NDE-RR} provides an easy-to-use sensitivity analysis technique. After specifying the strength of the unmeasured mediator-outcome confounder, we can calculate the bounding factor, and then divide the point and interval estimates of the conditional natural direct effect by this bounding factor. This yields lower bounds on the conditional natural direct effect estimates. We can analogously apply the theorems below.

As shown in \S \ref{sec::notation}, the conditional total effect can be decomposed as the product of the conditional natural direct and indirect effects on the risk ratio scale, which, coupled with Theorem \ref{thm::NDE-RR}, implies the following bound on the conditional natural indirect effects.

\begin{theorem}
\label{thm::NIE-RR}
Under the assumptions in (\ref{eq::pearl-assume-U}) and (\ref{eq::randomization-treatment}), the true conditional natural indirect effect on the risk ratio scale has the sharp bound
$
\NIE^\true_{\RR|c}  
\leq  
\NIE_{\RR|c}^\obs
\times \BF_{U|(M,c)}.
$
\end{theorem}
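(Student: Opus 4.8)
The plan is to reduce Theorem~\ref{thm::NIE-RR} to Theorem~\ref{thm::NDE-RR} through the multiplicative decomposition of the total effect, exactly as flagged in the paragraph preceding the statement. The one substantive ingredient is that the \emph{total} effect on the risk ratio scale is unaffected by the unmeasured mediator--outcome confounding: I claim $\TE^\true_{\RR|c}=\TE^\obs_{\RR|c}$, the latter being the directly identified $\P(Y=1\mid A=1,c)/\P(Y=1\mid A=0,c)$. To prove this I would multiply the $U$-expanded expressions~\eqref{eq::identi-rr-nde-u} and~\eqref{eq::identi-rr-nie-u}; the denominator of $\NIE^\true_{\RR|c}$ coincides with the numerator of $\NDE^\true_{\RR|c}$ and cancels, leaving
\[
\TE^\true_{\RR|c}=\frac{\sum_u\sum_m \P(Y=1\mid A=1,m,c,u)\,\P(m\mid A=1,c,u)\,\P(u\mid c)}{\sum_u\sum_m \P(Y=1\mid A=0,m,c,u)\,\P(m\mid A=0,c,u)\,\P(u\mid c)}.
\]
Collapsing the inner sum over $m$ by the law of total probability turns each arm into $\sum_u\P(Y=1\mid A=a,c,u)\,\P(u\mid c)$, and then the design condition $A\ind U\mid C$ from~\eqref{eq::randomization-treatment} lets me write $\P(u\mid c)=\P(u\mid A=a,c)$ and collapse the sum over $u$ into $\P(Y=1\mid A=a,c)$, giving $\TE^\true_{\RR|c}=\TE^\obs_{\RR|c}$.

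In parallel I would record that the observed effects obey the same decomposition: multiplying~\eqref{eq::identi-rr-nde} and~\eqref{eq::identi-rr-nie}, the common factor $\sum_m\P(Y=1\mid A=1,m,c)\P(m\mid A=0,c)$ cancels and the law of total probability yields $\NDE^\obs_{\RR|c}\times\NIE^\obs_{\RR|c}=\TE^\obs_{\RR|c}$.

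These two identities let me solve the true decomposition for the indirect effect and substitute the bound from Theorem~\ref{thm::NDE-RR}:
\[
\NIE^\true_{\RR|c}=\frac{\TE^\true_{\RR|c}}{\NDE^\true_{\RR|c}}=\frac{\NDE^\obs_{\RR|c}\,\NIE^\obs_{\RR|c}}{\NDE^\true_{\RR|c}}\le \NDE^\obs_{\RR|c}\,\NIE^\obs_{\RR|c}\cdot\frac{\BF_{U|(M,c)}}{\NDE^\obs_{\RR|c}}=\NIE^\obs_{\RR|c}\times\BF_{U|(M,c)},
\]
where the inequality uses $1/\NDE^\true_{\RR|c}\le \BF_{U|(M,c)}/\NDE^\obs_{\RR|c}$ from Theorem~\ref{thm::NDE-RR} and positivity of all factors.

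Since the proof is a rearrangement of Theorem~\ref{thm::NDE-RR}, I do not expect a serious obstacle; the delicate point is simply the invariance $\TE^\true_{\RR|c}=\TE^\obs_{\RR|c}$, where one must be sure the $U$-marginalization is justified by the design assumption $A\ind U\mid C$ rather than assumed, and where the structure of Figure~\ref{fg::DAG}---$U$ confounds only the mediator--outcome relation---is what makes the total effect confounding-free. Sharpness then comes for free: because $\TE^\true_{\RR|c}$ is a fixed identified constant, any confounder configuration attaining the Theorem~\ref{thm::NDE-RR} bound for $\NDE^\true_{\RR|c}$ forces every inequality in the display above to hold with equality, so the indirect-effect bound is attained under precisely the conditions stated for the direct effect.
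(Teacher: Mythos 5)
Your proposal is correct and takes essentially the same route as the paper: the supplementary proof of Theorem~\ref{thm::NIE-RR} likewise multiplies the $U$-expanded direct and indirect effect formulas to obtain $\TE^\true_{\RR|c}=\TE^\obs_{\RR|c}=\NDE^\obs_{\RR|c}\times\NIE^\obs_{\RR|c}$ (using $A\ind U\mid C$, which entered in deriving the paper's expression for $\NDE^\true_{\RR|c}$), then writes $\NIE^\true_{\RR|c}=\NIE^\obs_{\RR|c}\times\NDE^\obs_{\RR|c}/\NDE^\true_{\RR|c}$ and bounds the ratio by $\BF_{U|(M,c)}$ via Theorem~\ref{thm::NDE-RR}, with sharpness inherited from that theorem exactly as you argue. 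Note only that the displayed inequality in the paper's proof of this theorem carries a sign typo ($\geq$ where $\leq$ is meant, as the theorem statement confirms); your direction is the correct one.
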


Even if a researcher does not feel comfortable specifying the sensitivity parameters, one can still use Theorems \ref{thm::NDE-RR} and \ref{thm::NIE-RR} to report how large the sensitivity parameters would have to be for an estimate or lower confidence limit to lie below its null hypothesis value. We illustrate this in \S \ref{sec::cornfield} and \S \ref{sec::illustration} below.

If the natural direct effect averaged over $C$ is of interest, the true unconditional natural direct effect must be at least as large as the minimum of $\NDE^\obs_{\RR|c}/  \BF_{U|(M,c)}$ over $c$. If we further assume a common conditional natural direct effect among levels of $C$, as in the log-linear or logistic model for rare outcomes \citep[cf.][]{Vanderweele::2015}, then the true unconditional natural direct effect must be at least as large as the maximum of $\NDE^\obs_{\RR|c}/  \BF_{U|(M,c)}$ over $c$. A similar discussion holds for the unconditional natural indirect effect.

\subsection{Bounding Natural Direct and Indirect Effects on the Risk Difference Scale}

\begin{theorem} 
\label{thm::NDE-RD}
Under the assumptions in (\ref{eq::pearl-assume-U}) and (\ref{eq::randomization-treatment}), the true conditional natural direct effect on the risk difference scale has the sharp bound
$$
\NDE^\true_{\RD|c}  
\geq  \sum_m \PP(Y=1\mid A=1,m,c)   \PP(m\mid A=0,c)  / \BF_{U|(M,c)} 
-  \PP(Y  = 1 \mid A=0, c).
$$
\end{theorem}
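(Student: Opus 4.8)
The plan is to reduce Theorem \ref{thm::NDE-RD} to the already-established risk-ratio bound in Theorem \ref{thm::NDE-RR} by exploiting a common-denominator structure. First I would write the true effect as a difference of potential-outcome probabilities, $\NDE^\true_{\RD|c} = \P(Y_{1M_0}=1\mid c) - \P(Y_{0M_0}=1\mid c)$, and observe that the subtracted term is unaffected by the mediator-outcome confounding. Indeed, by composition $Y_{0M_0}=Y_0$, by consistency $Y_0=Y$ on $\{A=0\}$, and by the exchangeability implied by (\ref{eq::pearl-assume-U}) together with (\ref{eq::randomization-treatment}), one has $\P(Y_{0M_0}=1\mid c)=\P(Y=1\mid A=0,c)$. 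This identifiable baseline is precisely the constant that will be subtracted in the final bound.

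Next I would make the common-denominator observation precise. The denominator of $\NDE^\true_{\RR|c}$ in (\ref{eq::identi-rr-nde-u}) equals $\sum_u\sum_m \P(Y=1\mid A=0,m,c,u)\P(m\mid A=0,c,u)\P(u\mid c)$, which collapses to $\P(Y=1\mid A=0,c)$ by the same composition and exchangeability argument; meanwhile the law of total probability applied to the denominator of $\NDE^\obs_{\RR|c}$ in (\ref{eq::identi-rr-nde}) gives $\sum_m \P(Y=1\mid A=0,m,c)\P(m\mid A=0,c)=\P(Y=1\mid A=0,c)$. Hence both the true and the observed natural direct effects on the risk ratio scale share the identical, confounding-free denominator $\P(Y=1\mid A=0,c)$, while their numerators are exactly $\P(Y_{1M_0}=1\mid c)$ and $\sum_m \P(Y=1\mid A=1,m,c)\P(m\mid A=0,c)$, respectively.

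The final step multiplies the inequality of Theorem \ref{thm::NDE-RR}, namely $\NDE^\true_{\RR|c}\geq \NDE^\obs_{\RR|c}/\BF_{U|(M,c)}$, through by this common positive denominator. Because the denominators cancel, the ratio bound becomes the numerator bound $\P(Y_{1M_0}=1\mid c)\geq \sum_m \P(Y=1\mid A=1,m,c)\P(m\mid A=0,c)/\BF_{U|(M,c)}$, and subtracting the baseline $\P(Y=1\mid A=0,c)$ from both sides yields exactly the claimed risk-difference bound. I expect the main obstacle to be conceptual rather than computational: the entire argument hinges on recognizing that the true and observed risk-ratio direct effects carry one and the same denominator, so that the previously proved multiplicative bound converts without loss into an additive one via monotone operations, namely multiplication by a fixed positive constant and subtraction of a fixed constant. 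Since these operations preserve attainability, sharpness should transfer directly from the sharpness of Theorem \ref{thm::NDE-RR} under the configuration described there, requiring no separate optimization.
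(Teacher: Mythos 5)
Your proposal is correct and takes essentially the same approach as the paper: its proof likewise uses that both $\NDE^\obs_{\RR|c}$ and $\NDE^\true_{\RR|c}$ share the identified denominator $\P(Y=1\mid A=0,c)$, rewrites $\P(Y_{1M_0}=1\mid c)$ as $\sum_m \P(Y=1\mid A=1,m,c)\P(m\mid A=0,c)$ divided by $\NDE^\obs_{\RR|c}/\NDE^\true_{\RR|c}$, and then invokes the Theorem~\ref{thm::NDE-RR} bound $\NDE^\obs_{\RR|c}/\NDE^\true_{\RR|c}\leq \BF_{U|(M,c)}$ before subtracting the confounding-free baseline $\P(Y=1\mid A=0,c)$. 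Your sharpness remark also matches the paper, which inherits attainability directly from the proof of Theorem~\ref{thm::NDE-RR}.
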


Because the conditional total effect can be decomposed as the sum of the conditional natural direct and indirect effects on the risk difference scale as shown in \S \ref{sec::notation}, the identifiability of the conditional total effect and Theorem \ref{thm::NDE-RD} imply the following bound on the conditional natural indirect effect.

\begin{theorem} 
\label{thm::NIE-RD}
Under the assumptions in (\ref{eq::pearl-assume-U}) and (\ref{eq::randomization-treatment}), the true conditional natural indirect effect on the risk difference scale has the sharp bound
$$
\NIE^\true_{\RD|c}  
\leq    \PP(Y=1\mid A=1, c)-  \sum_m \PP(Y=1\mid A=1,m,c)   \PP(m\mid A=0,c)  / \BF_{U|(M,c)} . 
$$
\end{theorem}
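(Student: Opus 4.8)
The plan is to derive this upper bound on the natural indirect effect directly from the additive decomposition of the total effect together with the sharp lower bound on the natural direct effect already established in Theorem \ref{thm::NDE-RD}. First I would invoke the risk-difference decomposition $\TE_{\RD|c}^\true = \NDE_{\RD|c}^\true + \NIE_{\RD|c}^\true$ from \S\ref{sec::notation}, rearranged as $\NIE_{\RD|c}^\true = \TE_{\RD|c}^\true - \NDE_{\RD|c}^\true$. Since this writes the indirect effect as the total effect minus the direct effect, a lower bound on the direct effect turns immediately into an upper bound on the indirect effect, provided the total effect is a fixed quantity identified from the observed data.

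The second step is to establish that the conditional total effect does not depend on $U$ and is identified by the observed data. Although the mediator-outcome confounding is left uncontrolled, the exposure is effectively randomized given $C$: combining $A \ind U \mid C$ from (\ref{eq::randomization-treatment}) with the exposure-outcome independence in (\ref{eq::pearl-assume-U}), consistency, and composition gives $\PP(Y_a=1\mid c)=\sum_u \PP(Y=1\mid A=a,c,u)\,\PP(u\mid c)=\PP(Y=1\mid A=a,c)$ for $a=0,1$, where the last equality uses $\PP(u\mid c)=\PP(u\mid A=a,c)$. Hence $\TE_{\RD|c}^\true = \PP(Y=1\mid A=1,c) - \PP(Y=1\mid A=0,c)$, which involves only observed quantities.

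With these two ingredients the conclusion is a short calculation. Substituting the lower bound of Theorem \ref{thm::NDE-RD},
$$
\NDE^\true_{\RD|c} \geq \sum_m \PP(Y=1\mid A=1,m,c)\,\PP(m\mid A=0,c)/\BF_{U|(M,c)} - \PP(Y=1\mid A=0,c),
$$
into $\NIE_{\RD|c}^\true = \TE_{\RD|c}^\true - \NDE_{\RD|c}^\true$ and using the identified total effect, the two copies of $\PP(Y=1\mid A=0,c)$ cancel, leaving exactly the stated bound $\PP(Y=1\mid A=1,c) - \sum_m \PP(Y=1\mid A=1,m,c)\,\PP(m\mid A=0,c)/\BF_{U|(M,c)}$.

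Finally, I would address sharpness. Because the total effect is a fixed identified constant and the indirect-effect bound is obtained from the direct-effect bound merely by subtraction from that constant, the upper bound on $\NIE_{\RD|c}^\true$ is attained under exactly the same configuration of $U$ that makes the lower bound of Theorem \ref{thm::NDE-RD} tight, so sharpness is inherited. The main point requiring care is not any computation but the identifiability of the total effect used in the second step: one must confirm that the randomization assumption (\ref{eq::randomization-treatment}) is precisely what allows $\TE_{\RD|c}^\true$ to be recovered despite the unmeasured mediator-outcome confounder, so that subtracting the sharp lower bound on $\NDE_{\RD|c}^\true$ yields a bound expressed entirely in observed quantities.
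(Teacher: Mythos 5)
Your proposal is correct and follows essentially the same route as the paper: the paper's proof of this theorem likewise rearranges the additive decomposition $\NIE^\true_{\RD|c} = \TE^\true_{\RD|c} - \NDE^\true_{\RD|c}$, notes that the conditional total effect is identified as $\PP(Y=1\mid A=1,c)-\PP(Y=1\mid A=0,c)$ under (\ref{eq::pearl-assume-U}) and (\ref{eq::randomization-treatment}), substitutes the lower bound from Theorem \ref{thm::NDE-RD}, and inherits sharpness from the proof of Theorem \ref{thm::NDE-RR}. Your direct standardization argument for identifying the total effect is just a streamlined version of the paper's derivation via its integral formulas, not a different method.
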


Because of the linearity of the risk difference, the true unconditional direct and indirect effects can be obtained by averaging the bounds in Theorems \ref{thm::NDE-RD} and \ref{thm::NIE-RD} over the distribution of the observed covariates $C.$

\subsection{Cornfield-Type Inequalities for Unmeasured Mediator-Outcome Confounding}
\label{sec::cornfield}

We can equivalently state Theorem \ref{thm::NDE-RR} as the smallest value of the bounding factor to reduce an observed conditional natural direct effect to a true conditional causal natural direct effect, i.e.,
$
\BF_{U|(M,c)} \geq  \NDE^\obs_{\RR|c}/   \NDE^\true_{\RR|c}  , 
$
which further implies the following Cornfield-type inequalities \citep{Cornfield::1959, Ding::2014}.

\begin{theorem}\label{thm::cornfield}
Under the assumptions in (\ref{eq::pearl-assume-U}) and (\ref{eq::randomization-treatment}), to reduce an observed conditional natural direct effect $\NDE^\obs_{\RR|c}$ to a true conditional natural direct effect $\NDE^\true_{\RR|c} $, both $\RR_{AU|(M,c)}$ and $ \RR_{UY|(A=1,M,c)} $ must exceed $ \NDE^\obs_{\RR|c}/   \NDE^\true_{\RR|c}  ,$ and the largest of them must exceed
\begin{eqnarray} 
\label{eq::high}
\left[   \NDE^\obs_{\RR|c} + \left\{    \NDE^\obs_{\RR|c} ( \NDE^\obs_{\RR|c}- \NDE^\true_{\RR|c}  ) \right\}^{1/2}  \right] 
\Big/  \NDE^\true_{\RR|c}  . 
\end{eqnarray}
\end{theorem}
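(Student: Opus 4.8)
The plan is to reduce everything to elementary algebra in the three scalar quantities involved. Write $x = \RR_{AU|(M,c)}$, $y = \RR_{UY|(A=1,M,c)}$, and $b = \BF_{U|(M,c)} = xy/(x+y-1)$, and set $t = \NDE^\obs_{\RR|c}/\NDE^\true_{\RR|c}$ for the target reduction ratio. The starting point is the restatement of Theorem~\ref{thm::NDE-RR} given just above the theorem, namely $b \geq t$. I would also record at the outset that each sensitivity parameter is at least one: $\RR_{UY|(A=1,M,c)} \geq 1$ because it is a maximum divided by a minimum, and $\RR_{AU|(M,c)} \geq 1$ because the conditional laws $\P(u \mid A=1,m,c)$ and $\P(u \mid A=0,m,c)$ both sum to one over $u$, so their ratio cannot be below one for every $u$. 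Thus $x,y \geq 1$.

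For the first claim, that both $x$ and $y$ exceed $t$, I would invoke the property already noted in \S~\ref{sec::sensitivityparameters} that $b$ is no larger than either sensitivity parameter, i.e. $b \leq \min(x,y)$. This is immediate from the definition: assuming without loss of generality $x \leq y$, the inequality $xy/(x+y-1) \leq x$ is equivalent to $y \leq x+y-1$, i.e. to $x \geq 1$, which holds. Combining $\min(x,y) \geq b$ with $b \geq t$ yields $\min(x,y) \geq t$, so both $x$ and $y$ are at least $t$.

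For the second claim I would exploit that $b$ is symmetric and monotone increasing in each argument, as already noted in \S~\ref{sec::sensitivityparameters}. Writing $m = \max(x,y)$ and using $x \leq m$ and $y \leq m$, monotonicity gives $t \leq b(x,y) \leq b(m,m) = m^2/(2m-1)$, so $m$ satisfies $m^2 - 2tm + t \geq 0$. The function $g(m) = m^2/(2m-1)$ has $g'(m) = 2m(m-1)/(2m-1)^2 \geq 0$ on $[1,\infty)$ and $g(1) = 1$, so it is increasing there and the equation $g(m) = t$ has the unique root $m^\star = t + \{t(t-1)\}^{1/2} \geq 1$ in that range, the other root of the quadratic lying below $1$ for $t > 1$. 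Monotonicity then converts $g(m) \geq t$ into $m \geq m^\star$. Finally I would translate $m^\star$ back: since $t = \NDE^\obs_{\RR|c}/\NDE^\true_{\RR|c}$, we have $t + \{t(t-1)\}^{1/2} = [\NDE^\obs_{\RR|c} + \{\NDE^\obs_{\RR|c}(\NDE^\obs_{\RR|c} - \NDE^\true_{\RR|c})\}^{1/2}]/\NDE^\true_{\RR|c}$, matching (\ref{eq::high}).

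The only real care needed — the hard part, such as it is — lies in the second claim: correctly selecting the admissible root of the quadratic and justifying it through the monotonicity of $g$ on $[1,\infty)$, together with the bookkeeping that $b(m,m)$ is the largest value $b$ can take once $\max(x,y)$ is fixed. The strictness in \emph{must exceed} reflects that equality $b = t$, hence equality in the threshold, is attained only in the boundary configurations noted after Theorem~\ref{thm::NDE-RR}; in the interior the inequalities are strict. Everything else is the monotonicity remark about the bounding factor, which is already established, plus the algebraic rearrangement of the square-root expression.
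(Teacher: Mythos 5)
Your proposal is correct and follows essentially the same route as the paper: both start from the restatement of Theorem~\ref{thm::NDE-RR} as $\BF_{U|(M,c)} \geq \NDE^\obs_{\RR|c}/\NDE^\true_{\RR|c}$, use the monotonicity of $g(x,y)=xy/(x+y-1)$ (Lemma A.2) to get $\min(x,y)\geq t$ and $\RR_{\max}^2/(2\RR_{\max}-1)\geq t$, and then solve the quadratic for the threshold $t+\{t(t-1)\}^{1/2}$. Your only deviations are cosmetic and in fact slightly more careful than the paper's: you verify $b\leq\min(x,y)$ by direct algebra rather than the paper's limiting argument $\RR_{AU|(M,c)}\to\infty$, and you explicitly justify discarding the lower root $t-\{t(t-1)\}^{1/2}<1$ via the monotonicity of $m\mapsto m^2/(2m-1)$ on $[1,\infty)$, a step the paper compresses into ``solving the inequality.''
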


To explain away an observed conditional natural direct effect $\NDE^\obs_{\RR|c}$, i.e.,  $\NDE^\true_{\RR|c} =1$, 
both sensitivity parameters must exceed $\NDE^\obs_{\RR|c}$, and the maximum of them must exceed $\NDE^\obs_{\RR|c}+\{ \NDE^\obs_{\RR|c}  (\NDE^\obs_{\RR|c} - 1) \}^{1/2}.$ In the Supplementary Material, we present the inequalities derived from Theorem \ref{thm::NDE-RD} on the risk difference scale.

\section{Illustration}
\label{sec::illustration}

\citet{Vanderweele::2012genetic} conducted mediation analysis to assess the extent to which the effect of variants on chromosome 15q25.1 on lung cancer is mediated through smoking and to which it operates through other causal pathways. The exposure levels correspond to changes from 0 to 2 C alleles, smoking intensity is measured by the square root of cigarettes per day, and the outcome is the lung cancer indicator. The analysis of \citet{Vanderweele::2012genetic} was on the odds ratio scale using a lung cancer case-control study, but for a rare disease the odds ratios approximate risk ratios. After controlling for observed sociodemographic covariates, they found that the natural direct effect estimate is $1.72$ with $95\%$ confidence interval $[1.34, 2.21]$, and the natural indirect effect estimate  is $1.03$ with $95\%$ confidence interval $[0.99, 1.07]$. Their analysis used logistic regression models, requiring all the odds ratios be the same across different levels of the measured covariates.

The evidence for the indirect effect is weak because the confidence interval covers the null of no effect. However, the direct effect deviates significantly from the null. According to \S \ref{sec::cornfield}, to reduce the point estimate of the conditional natural direct effect to be below unity, both $\RR_{AU|(M,c)}$ and $\RR_{UY|(A=1,M,c)} $ must exceed $1.72$, and the maximum of them must exceed $1.72+(1.72\times 0.72)^{1/2} = 2.83$. Intuitively, for a binary confounder $U$ under parametric models with main effects, to explain away the direct effect estimate it would generally have to \citep[][cf. Supplementary Material]{Greenland::2003} increase the likelihood of $Y$ and increase $M$ by at least $1.72$-fold, and it would have to increase at least one of $Y$ or $M$ by $2.83$-fold. To reduce the lower confidence limit to be below unity, both sensitivity parameters must exceed $1.34$, and the maximum of them must exceed $1.34+(1.34\times 0.34)^{1/2} = 2.02.$
Intuitively, for a binary confounder $U$ under parametric models with main effects, to explain away the lower confidence limit for the direct effect it would generally have to \citep[][cf. Supplementary Material]{Greenland::2003} increase the likelihood of  $Y$ and increase $M$ by at least $1.34$-fold, and it would have to increase at least one of $Y$ or $M$ by $2.02$-fold. 
This would constitute fairly substantial confounding.

Previous studies found that the exposure-mediator association in this context is weak \citep{Saccone::2010}. Suppose the risk ratio relating $A$ and $M$ is less than $1.40$. If we assume that the collider bias is smaller than this in magnitude, e.g., $\RR_{AU|(M,c)} \leq 1.40$, as indicated by \citet{Greenland::2003}, then $\RR_{UY|(A=1,M,c)}$ must be at least as large as $11.47$ to reduce the point estimate to be below unity, and be at least as large as $8.93$ to reduce the lower confidence limit to be below unity. In general, when $\RR_{AU|(M,c)}$ is relatively small, we require an extremely large $\RR_{UY|(A=1,M,c)}$ to reduce the conditional natural direct effect estimate to be below unity. In fact, if $\RR_{AU|(M,c)}$ is smaller than the lower confidence limit of the conditional natural direct effect, it is impossible to reduce it to be below unity because the bounding factor is always smaller than $\RR_{AU|(M,c)}$.

\section{Discussion}
\label{sec::discussion}

Theorems \ref{thm::NDE-RR}--\ref{thm::cornfield} are most useful when the conditional natural direct effect is larger than unity. We can also simply relabel the exposure levels and all the results still hold.

In \S \ref{sec::sensitivity}, we derived sensitivity analysis formulae for causal parameters on the risk ratio and risk difference scales. If we have rare outcomes as in most case-control studies, we can approximate causal parameters on the odds ratio scale by those on the risk ratio scale, and all the results about risk ratio also apply to odds ratio. We have illustrated this in \S \ref{sec::illustration}.
Furthermore, we comment in the Supplementary Material that similar results also hold for count and continuous positive outcomes and rare time-to-event outcomes, if we replace the relative risks on the outcome by the hazard ratios and mean ratios.

The assumption $A\ind U\mid C$ may be violated if $U$ affects $(A,M,Y)$ simultaneously, i.e., unmeasured exposure-mediator, exposure-outcome, and mediator-outcome confounding all exist. Even if $A\ind U\mid C$ is violated, we show in the Supplementary Material that Theorems \ref{thm::NDE-RR} and \ref{thm::NDE-RD} can be interpreted as the bounds of the conditional natural direct effects for the unexposed population, which is also of interest in other contexts \citep{Vansteelandt::2012, Lendle::2013}.

\section*{Acknowledgement}
The authors thank the editor, associate editor and two referees for helpful comments. 
This research was funded by National Institutes of Health, U.S.A. 
T. J. VanderWeele is also affiliated with the Department of Biostatistics at the Harvard T. H. Chan School of Public Health.

%
%
%

\bibliographystyle{plainnat}
\bibliography{sensitivitymediation}

\begin{center}
{\bf \Large Supplementary Material}
\end{center}

Appendix A presents three lemmas, which play key roles in later proofs of the theorems and are of independent interest. 
Appendix B contains the proofs of the theorems in \S 4.
Appendix C discusses the interpretation of the second sensitivity parameter under some parametric assumptions.
Appendix D includes extensions and technical details of the discussion in the main text.

    \def\MR{\textsc{MR}}
    \def\HR{\textsc{HR}}
    \def\pa{\text{pa}}

    \setcounter{equation}{0}
        \setcounter{section}{0}
    \setcounter{lemma}{0}
    \setcounter{theorem}{0}
    \setcounter{figure}{0}
        \setcounter{table}{0}

\renewcommand{\theequation}{A.\arabic{equation}}
\renewcommand{\thelemma}{A.\arabic{lemma}}
\renewcommand{\thesection}{Appendix~\Alph{section}}
\renewcommand{\thetheorem}{A.\arabic{theorem}}
\renewcommand{\thefigure}{A.\arabic{figure}}
\renewcommand{\thetable}{A.\arabic{table}}

\section{Lemmas}

\begin{lemma}
\label{lemma::basic-factorial-derivative}
Define 
$
h(x) = (c_1 x  +1) / (c_2 x + 1). 
$
If $c_1  >  c_2$, then $h'(x) > 0$, and $h(x)$ is increasing; if $c_1 \leq c_2$, then $h'(x) \leq 0$, and $h(x)$ is non-increasing.
\end{lemma}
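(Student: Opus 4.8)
The plan is to prove the claim by a single application of the quotient rule, since the sign of $h'$ turns out to depend only on $c_1-c_2$. First I would fix the domain on which $h$ is considered, namely the set where $c_2 x + 1 \neq 0$; in the intended application the quantities $x$, $c_1$, $c_2$ are all nonnegative, so $c_2 x + 1 \geq 1 > 0$ and this restriction is automatically satisfied. On that domain $h$ is differentiable.

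Next I would differentiate. Applying the quotient rule to $h(x) = (c_1 x + 1)/(c_2 x + 1)$ and expanding the numerator, the cross terms $c_1 c_2 x$ cancel, leaving
$$
h'(x) = \frac{c_1(c_2 x + 1) - (c_1 x + 1)c_2}{(c_2 x + 1)^2} = \frac{c_1 - c_2}{(c_2 x + 1)^2}.
$$
Because the denominator is a square, it is strictly positive throughout the domain, so the sign of $h'(x)$ equals the sign of $c_1 - c_2$ and does not depend on $x$. Hence if $c_1 > c_2$ then $h'(x) > 0$ for every $x$, so $h$ is strictly increasing; if $c_1 \leq c_2$ then $h'(x) \leq 0$ for every $x$, so $h$ is non-increasing. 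This yields both assertions.

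I do not expect any genuine obstacle: the computation is elementary and the entire content is the cancellation that collapses the numerator to the constant $c_1 - c_2$. The only point requiring a word of care is ensuring the denominator never vanishes, which I would handle, as above, by noting that in every context where the lemma is invoked the relevant arguments are nonnegative, forcing $c_2 x + 1 > 0$.
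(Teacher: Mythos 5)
Your proof is correct: the quotient-rule computation $h'(x) = (c_1 - c_2)/(c_2 x + 1)^2$ is exactly the elementary calculation the paper has in mind, as it omits the proof entirely with the remark that it is straightforward. Your added care about the domain (noting $c_2 x + 1 > 0$ for the nonnegative arguments arising in the applications) is a reasonable precaution and consistent with how the lemma is used.
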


\begin{lemma}
\label{lemma::cornfield-increasing}
If $x, y>1$, then $g(x,y) =(xy)/(x+y-1)$ is increasing in both $x$ and $y$.
\end{lemma}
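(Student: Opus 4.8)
The plan is to exploit the symmetry $g(x,y) = g(y,x)$, which reduces the claim to showing that $g$ is increasing in its first argument for each fixed $y>1$; the conclusion for the second argument then follows immediately by interchanging the roles of $x$ and $y$.

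First I would pass to the reciprocal. Since $x,y>1$ we have $x+y-1>0$, so $g(x,y)>0$ and it is equivalent to show that $1/g$ is \emph{decreasing} in $x$. A direct algebraic expansion gives
\begin{equation*}
\frac{1}{g(x,y)} = \frac{x+y-1}{xy} = \frac{1}{y} + \frac{y-1}{y}\cdot\frac{1}{x}.
\end{equation*}
Viewed as a function of $x$ with $y$ held fixed, this is an affine function of $1/x$ with coefficient $(y-1)/y$, and $1/x$ is strictly decreasing in $x$.

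The one point that needs care is the sign of that coefficient: because $y>1$, both $y-1>0$ and $y>0$, so $(y-1)/y>0$. Hence the displayed expression is strictly decreasing in $x$, and therefore $g$ is strictly increasing in $x$; invoking symmetry then completes the proof. I expect no genuine obstacle here, as the statement is elementary. The only subtlety worth flagging is that the hypothesis $y>1$ is exactly what forces the coefficient to be positive—dropping it would let the monotonicity reverse—so the argument must genuinely use $y>1$, not merely the positivity of the denominator. An equivalent route is to compute $\partial g/\partial x = y(y-1)/(x+y-1)^2$ directly by the quotient rule and read off its sign from $y(y-1)>0$, but the reciprocal computation is cleaner since it avoids squaring the denominator.
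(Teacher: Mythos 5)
Your proof is correct. The paper gives no explicit argument for this lemma (it states only that the proof is ``straightforward''), and your reciprocal computation, writing $1/g(x,y) = 1/y + \{(y-1)/y\}(1/x)$ as an affine function of $1/x$ with positive slope, together with the symmetry reduction and the direct derivative check $\partial g/\partial x = y(y-1)/(x+y-1)^2 > 0$, supplies exactly the elementary verification the paper leaves to the reader, and you correctly isolate where the hypothesis $y>1$ is genuinely needed.
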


The proofs of Lemmas \ref{lemma::basic-factorial-derivative} and \ref{lemma::cornfield-increasing} are straightforward.

Let $X$ be any random element of scalar or vectors. Let $F_1(dx)$ and $F_0(dx)$ be two probability measures defined on the domain of $X$. For example, they may be two distribution functions of $X$ conditional on different events. Assume that there exists a Radon--Nikodym derivative between $F_1(dx)$ and $F_0(dx)$, i.e., $ F_1(dx) / F_0(dx)  = \gamma(x) $. Define $\gamma = \max_x \gamma(x)$ as the maximum value of the Radon--Nikodym derivative over the domain of $X$. We have $\gamma \geq  1$; otherwise $\int F_1(dx) < \int F_0(dx)=1$, and $F_1(dx)$ cannot a probability measure. Let $r(x)$ be a nonnegative function of $x$, and define $ \delta = \max_x r(x) / \min_x r(x) \geq 1$ as the ratio of the maximum divided by the minimum of $r(x)$ over $x$.

\begin{lemma}
\label{lemma:fundamental}
If $\int r(x) F_1(dx) < \infty$ for $a=0,1$, then we have the sharp bound
\begin{eqnarray}
\label{eq::lemma3}
\frac{   \int r(x) F_1(dx)     }{  \int r(x) F_0(dx)} \leq \frac{ \gamma  \delta  }{  \gamma + \delta - 1}.
\end{eqnarray}
\end{lemma}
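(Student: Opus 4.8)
The plan is to exploit the representation $F_1(dx)=\gamma(x)\,F_0(dx)$ to turn the ratio of integrals into a single optimization against $F_0$, and then to dominate the integrand by a carefully chosen linear function so that the two structural constraints can be used at once. First I would note that the ratio $\int r\,dF_1 \big/ \int r\,dF_0$ is invariant under rescaling $r$, and that $\delta$ is likewise scale-invariant, so I may assume $\min_x r(x)=1$ and hence $r(x)\in[1,\delta]$. Writing the numerator as $\int r(x)\gamma(x)\,F_0(dx)$ (legitimate since the integrals are finite by hypothesis), the problem becomes bounding $\int r\gamma\,dF_0\big/\int r\,dF_0$ using only the two facts available: $0\le\gamma(x)\le\gamma$ pointwise, and $\int \gamma\,dF_0=\int dF_1=1$.

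The crux is the pointwise inequality
\[
r(x)\,\gamma(x)\;\le\; g(\gamma,\delta)\,\{\,r(x)+\gamma(x)-1\,\},\qquad g(\gamma,\delta)=\frac{\gamma\delta}{\gamma+\delta-1},
\]
valid for every $x$ because $r(x)\in[1,\delta]$ and $\gamma(x)\in[0,\gamma]$. I would justify it by observing that $g(\gamma,\delta)\{r+\gamma-1\}-r\gamma$ is bilinear, i.e.\ affine in $r$ for fixed $\gamma$ and conversely, so its minimum over the rectangle $[1,\delta]\times[0,\gamma]$ is attained at one of the four corners; a direct check shows it is nonnegative there, vanishing exactly at $(r,\gamma)=(1,0)$ and at $(\delta,\gamma)$ (the latter using $g(\gamma,\delta)(\gamma+\delta-1)=\gamma\delta$, the former trivially). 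Integrating against $F_0$ and invoking $\int\gamma\,dF_0=1$ and $\int dF_0=1$ collapses the right-hand side:
\[
\int r\gamma\,dF_0\;\le\; g(\gamma,\delta)\Bigl\{\int r\,dF_0+\int\gamma\,dF_0-1\Bigr\}\;=\;g(\gamma,\delta)\int r\,dF_0,
\]
which is exactly the claimed bound after dividing by $\int r\,dF_0$. The only facts about $g$ needed are elementary, e.g.\ that $g(\gamma,\delta)\ge1$ whenever $\gamma,\delta\ge1$.

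For sharpness I would exhibit a two-point configuration attaining equality: take $X\in\{x_1,x_2\}$ with $F_0(x_1)=1/\gamma$, set $\gamma(x_1)=\gamma$ and $\gamma(x_2)=0$ so that $F_1$ is a point mass at $x_1$, and put $r(x_1)=\delta$, $r(x_2)=1$; then $\int r\,dF_1\big/\int r\,dF_0=\delta\big/\{\delta/\gamma+(1-1/\gamma)\}=g(\gamma,\delta)$. These are precisely the corners at which the pointwise inequality is tight. I expect the main obstacle to be the \emph{discovery} of the correct linear majorant, not its verification: the naive estimate $\int r\gamma\,dF_0\le\gamma\int r\,dF_0$ discards the normalization $\int\gamma\,dF_0=1$ and yields a strictly weaker constant, so the whole argument hinges on choosing the slope $g(\gamma,\delta)$ that makes the constraint $\int\gamma\,dF_0=1$ enter with exactly the right coefficient. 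Reducing the verification to the four corners via bilinearity is the device that keeps this step routine.
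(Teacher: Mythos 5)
Your proposal is correct, and it reaches the bound by a genuinely different route than the paper. The paper's proof reparametrizes both integrals as convex combinations of $\max_x r(x)$ and $\min_x r(x)$ with weights $w_1,w_0$, sets $\Gamma=w_1/w_0$, bounds $\Gamma\le\gamma$ via the Radon--Nikodym derivative, and then maximizes the resulting ratio over $w_1$ using two auxiliary monotonicity lemmas (Lemmas A.1 and A.2), with a case split on $\Gamma>1$ versus $\Gamma<1$. You instead produce a one-shot duality-type certificate: after normalizing $\min_x r(x)=1$, the pointwise linear majorant
\[
r(x)\,\gamma(x)\;\le\;\frac{\gamma\delta}{\gamma+\delta-1}\,\bigl\{r(x)+\gamma(x)-1\bigr\},
\]
verified on the four corners of $[1,\delta]\times[0,\gamma]$ by affineness in each argument, integrates against $F_0$ so that the normalizations $\int\gamma\,dF_0=\int dF_0=1$ collapse the right-hand side to $g(\gamma,\delta)\int r\,dF_0$. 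Your corner checks are right (nonnegativity at $(1,\gamma)$ and $(\delta,0)$ needs only $(\gamma-1)(\delta-1)\ge0$, i.e.\ $g\ge1$), and your two-point equality configuration is essentially the paper's Bernoulli attainability example with $p_1=1$ and $q_1=1/\gamma$, so sharpness matches. What your argument buys: it is shorter, dispenses with the case analysis and both auxiliary calculus lemmas, and makes the equality structure transparent, since equality forces all mass onto the two corners where the majorant is tight. What the paper's route buys: the intermediate bound $\Gamma\delta/(\Gamma+\delta-1)$ in terms of the realized ratio $\Gamma=w_1/w_0$, a refinement your inequality does not produce, and its Lemma A.2 is reused later (in the proofs of Theorems 1, 2 and 5), so the machinery is not wasted. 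Both proofs share the same implicit nondegeneracy assumptions ($F_1\ll F_0$ and $\min_x r(x)>0$, without which $\delta$ is undefined), so your normalization and the division by $\int r\,dF_0\ge1$ are licit exactly where the paper's argument is.
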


\begin{proof}
[of Lemma \ref{lemma:fundamental}]
First, we define
\begin{eqnarray*}
w_1 = \frac{   \int   \{   r(x) - \min_x r(x) \}  F_1(dx)    }{  \max_x r(x) - \min_x r(x)},  \quad 
1-w_1 = \frac{   \int   \{  \max_x r(x) -  r(x)  \}  F_1(dx)    }{  \max_x r(x) - \min_x r(x)},  \\
w_0 = \frac{   \int   \{   r(x) - \min_x r(x) \}  F_0(dx)    }{  \max_x r(x) - \min_x r(x)}, \quad 
1-w_0 = \frac{   \int   \{  \max_x r(x) -  r(x)  \}  F_0(dx)    }{  \max_x r(x) - \min_x r(x)}.
\end{eqnarray*}
Then the left hand side of the inequality in (\ref{eq::lemma3}) can be re-written as
\begin{eqnarray*}
\frac{   \int r(x) F_1(dx)     }{  \int r(x) F_0(dx)}&=&
\frac{    w_1 \max_x r(x)  + (1-w_1) \min_x r(x)    }
{    w_0 \max_x r(x)  + (1-w_0) \min_x r(x)       } ,
\end{eqnarray*}
with both the numerator and denominator expressed as convex combinations of the maximum and minimum values of $r(x).$

Second, we define $\Gamma = w_1/w_0$, and the left hand side of (\ref{eq::lemma3}) can be re-written as
\begin{eqnarray}
\label{eq::lemma3-function}
\frac{   \int r(x) F_1(dx)     }{  \int r(x) F_0(dx)}
=
 \frac{     \{  \max_x r(x) -  \min_x r(x) \}  w_1 + \min_x r(x)   }
{       \{  \max_x r(x) -  \min_x r(x) \}  /\Gamma \times w_1 + \min_x r(x)     }.
\end{eqnarray}
It is straightforward to show that $\Gamma$ is bounded from above by $\gamma$, because
\begin{eqnarray*}
\Gamma  
= \frac{     \int   \{   r(x) - \min_x r(x) \}  F_1(dx)    }{   \int   \{   r(x) - \min_x r(x) \} F_0(dx)  } 
= \frac{     \int   \{   r(x) - \min_x r(x) \}  \gamma(x)  F_0(dx)    }{   \int   \{   r(x) - \min_x r(x)\} F_0(dx)  }  
\leq  \max_x  \gamma(x)  = \gamma.
\end{eqnarray*}

If $\Gamma > 1$, the right-hand side of (\ref{eq::lemma3-function}) is increasing in $w_1$ according to Lemma \ref{lemma::cornfield-increasing}, and therefore it attains the maximum at $w_1=1$, i.e.,
\begin{eqnarray*}
\frac{   \int r(x) F_1(dx)     }{  \int r(x) F_0(dx)} 
&\leq& 
\frac{     \{  \max_x r(x) -  \min_x r(x) \}   + \min_x r(x)   }
{       \{  \max_x r(x) -  \min_x r(x) \}  /\Gamma   + \min_x r(x)     } \\
&=& \frac{  \max_x r(x) \times \Gamma    }{  \max_x r(x)  -  \min_x r(x) +  \min_x r(x)\times \Gamma   } 
= \frac{\Gamma \delta }{\Gamma + \delta - 1}
\leq  \frac{\gamma \delta }{\gamma + \delta - 1} ,
\end{eqnarray*}
where the last inequality follows from Lemma \ref{lemma::basic-factorial-derivative}.
If $\Gamma < 1$, the right-hand side of (\ref{eq::lemma3-function}) is decreasing in $w_1$ according to Lemma \ref{lemma::cornfield-increasing}, and therefore it attains the maximum at $w_1=0$, i.e.,
$$
\frac{   \int r(x) F_1(dx)     }{  \int r(x) F_0(dx)} 
\leq 
1 \leq  \frac{\gamma \delta }{\gamma + \delta - 1} ,
$$
where the last inequality follows from Lemma \ref{lemma::basic-factorial-derivative}. Therefore, (\ref{eq::lemma3}) holds for any $\Gamma$.

The bound in Lemma \ref{lemma:fundamental} is sharp, in the sense that it is attainable when $X$ is a Bernoulli random variable. Without loss of generality, we assume $p_1 = F_1\{X=1 \}$, $q_1 = F_0\{X=1 \}$, $p_1/q_1 = \gamma \geq 1$, and $r(1) /  r(0) = \delta \geq 1$. The ratio in Lemma \ref{lemma:fundamental} becomes
$$
\frac{   \int r(x) F_1(dx)     }{  \int r(x) F_0(dx)}
=
\frac{ r(1)p_1 + r(0)(1-p_1)   }{  r(1)q_1 + r(0)(1-q_1) }
=
\frac{1 + (\delta  - 1)p_1}{ 1 + (\delta - 1) p_1 / \gamma }
\leq 
\frac{1 + (\delta  - 1)}{ 1 + (\delta - 1)  / \gamma }
=\frac{\delta\gamma}{\delta + \gamma -1},
$$ 
where the inequality above follows from Lemma \ref{lemma::basic-factorial-derivative} with $p_1$ taking value $1$.
\end{proof}

\section{Proofs of the Theorems in \S 4}

In the proofs below, we discuss $(C, U, M)$ of general types, and introduce general notation. For instance, summations will be replaced by integrations, $\max$ by $\sup$, and $\min$ by $\inf$. When $(C,U, M)$ are categorical, all formulae below reduce to those in the main text. 
When $\P(m\mid a,c,u)$ and $\P(y\mid a,m,c,u)$ follow parametric models, e.g., log-linear models for binary $M$ and $Y$, the above two sensitivity parameters reduce to functions of the model parameters or the regression coefficients. This fact helps interpret the sensitivity parameters for both discrete and continuous $C$ and $U.$

\begin{proof}
[of Theorem 1] 
The observed conditional natural direct effect is
\begin{eqnarray}
\NDE^\obs_{\RR|c}  
&=& { \int   \P( Y=1 \mid A=1,m,c) F(dm\mid A=0,c)  \over  \int   \P(Y=1\mid A=0,m,c)  F(dm\mid A=0,c) }  
\label{eq::nde-obs1}\\
&=& { \int \int   \P( Y=1 \mid A=1,m,c,u)  F(du\mid A=1,m,c)  F(dm\mid A=0,c) 
 \over  \int \int   \P(Y=1\mid A=0,m,c, u) F(du\mid A=0 , m, c)  F(dm\mid A=0,c) }  
 \label{eq::-nde-obs2} \\
 &=& { \int \int   \P( Y=1\mid A=1,m,c,u)  F(du\mid A=1,m,c)  F(dm\mid A=0,c) 
 \over  \int \int   \P(Y=1\mid A=0,m,c, u) F(dm, du\mid A=0 , c) } ,
 \label{eq::nde-obs3} 
\end{eqnarray} 
where (\ref{eq::nde-obs1}) follows from the definition, (\ref{eq::-nde-obs2}) follows from the law of total probability, and (\ref{eq::nde-obs3}) follows from the definition of the joint distribution of $(M,U)$.

The true conditional natural direct effect is
\begin{eqnarray}
\NDE_{\RR|c} ^\true 
&=&  \frac{ \int \int   \P(  Y_{1M_0} = 1 \mid  M_0 = m,  c, u )  F_{(M_0, U)} ( dm, du\mid c  ) } 
{  \int \int  \P(Y_{0M_0} = 1 \mid M_0 =m,  c, u)  F_{(M_0, U)} ( dm, du\mid c  )   }  
\label{eq::nde-2}\\
&=& \frac{ \int \int   \P(  Y_{1m} = 1 \mid  M_0 = m,  c, u )  F_{(M_0, U)} ( dm, du\mid c  ) } 
{  \int \int  \P(Y_{0m} = 1 \mid M_0 =m,  c, u)  F_{(M_0, U)} ( dm, du\mid c  )   }  
\label{eq::nde-3}\\
&=& \frac{ \int \int   \P(  Y_{1m} = 1 \mid    c, u )  F_{M_0}(dm\mid c,u)   F ( du\mid c  ) } 
{  \int \int  \P(Y_{0m} = 1 \mid    c, u)   F_{M_0}(dm\mid c,u)   F ( du\mid c  )  }  
\label{eq::nde-4}\\
&=&  \frac{ \int \int   \P(  Y_{1m} = 1 \mid A=1, m , c, u )  F_{M_0}(dm\mid A=0, c,u)   F ( du\mid c  ) } 
{  \int \int  \P(Y_{0m} = 1 \mid A=0,m,   c, u)   F_{M_0}(dm\mid A=0, c,u)   F ( du\mid c  )   }  
\label{eq::nde-5}\\
&=& \frac{ \int \int   \P(  Y= 1 \mid A=1, m,  c, u )  F(dm\mid A=0, c,u)   F ( du\mid c  ) } 
{  \int \int  \P(Y = 1 \mid A=0,m,   c, u)   F(dm\mid A=0, c,u)   F ( du\mid c  )   }  
\label{eq::nde-6}\\
&=&  \frac{ \int \int   \P(  Y= 1 \mid A=1, m,  c, u )  F(dm\mid A=0, c,u)   F ( du\mid A=0,  c  ) } 
{  \int \int  \P(Y = 1 \mid A=0,m,   c, u)   F(dm\mid A=0, c,u)   F ( du\mid A=0, c  )   }    
\label{eq::nde-7}\\
&=& \frac{ \int \int   \P(  Y= 1 \mid A=1, m,  c, u )  F(dm, du \mid A=0, c)     } 
{  \int \int  \P(Y = 1 \mid A=0,m,   c, u)   F(dm, du\mid A=0, c )   }     ,
\label{eq::nde-8}
\end{eqnarray}
where (\ref{eq::nde-2}) follows from the definition and the law of total probability, (\ref{eq::nde-3}) follows from consistency, (\ref{eq::nde-4}) follows from $Y_{am}\ind M_0\mid (C,U)$ and the definition of the joint distribution of $(M_0, U)$, (\ref{eq::nde-5}) follows from $Y_{am}\ind A\mid (C,U), Y_{am}\ind M\mid (A,C,U)$ and $A\ind M_0\mid (C,U)$, (\ref{eq::nde-6}) follows from consistency, (\ref{eq::nde-7}) follows from $A\ind U\mid C$, and (\ref{eq::nde-8}) follows from the definition of the joint distribution of $(M,U)$ given $A=0$ and $C=c.$

Therefore, the ratio $ \NDE^\obs_{\RR|c}  / \NDE^\true_{\RR|c} $ is bounded by
\begin{eqnarray}  
\frac{\NDE^\obs_{\RR|c} } { \NDE^\true_{\RR|c}     }  
&=& { \int \int   \P( Y=1\mid A=1,m,c,u)  F(du\mid A=1,m,c)  F(dm\mid A=0,c)  \over  
\int \int   \P(  Y= 1 \mid A=1, m,  c, u )  F(dm, du\mid A=0, c)
}  \nonumber\\
&=& { \int   \left\{    \int   \P( Y=1\mid A=1,m,c,u)  F(du\mid A=1,m,c)  \right\}  F(dm\mid A=0,c)  
\over  
\int \left\{  \int   \P(  Y= 1 \mid A=1, m,  c, u )  F(du\mid A=0, m,c) \right\} F(dm\mid A=0,c)   
} \nonumber\\
&\leq& \sup_m  
{
  \int   \P( Y=1\mid A=1,m,c,u)  F(du\mid A=1,m,c)
  \over 
  \int   \P(  Y= 1 \mid A=1, m,  c, u )  F(du\mid A=0, m,c)
} \label{eq::ratio-bound-max-over-m} 
.
\end{eqnarray}
For given values of $m$ and $c$, $\P( Y=1\mid A=1,m,c,u)$ is a nonnegative function of $u$, and $F(du\mid A=1,m,c)$ and $F(du\mid A=0,m,c)$ are two measures on the domain of $U.$ 
We apply Lemma \ref{lemma:fundamental}, and obtain
\begin{eqnarray}
{
  \int   \P( Y=1\mid A=1,m,c,u)  F(du\mid A=1,m,c)
  \over 
  \int   \P(  Y= 1 \mid A=1, m,  c, u )  F(du\mid A=0, m,c)
}
\leq
{    \RR_{AU|(m,c)}\times \RR_{UY|(A=1,m,c)}  
\over
  \RR_{AU|(m,c)} +  \RR_{UY|(A=1,m,c)}   -1    }. \nonumber \\
  \label{eq::ineq-2}
\end{eqnarray}
The bounds in (\ref{eq::ratio-bound-max-over-m}) and (\ref{eq::ineq-2}), and Lemma \ref{lemma::cornfield-increasing} altogether imply
\begin{eqnarray}
\frac{\NDE^\obs_{\RR|c} } { \NDE^\true_{\RR|c}     } 
&\leq&    
\sup_m  
{    \RR_{AU|(m,c)}\times \RR_{UY|(A=1,m,c)}  
\over
  \RR_{AU|(m,c)} +  \RR_{UY|(A=1,m,c)}   -1    }  = \sup_m \BF_{U|(m,c)}  \nonumber \\
  & \leq&  
  {   \RR_{AU|(M,c)}\times  \RR_{UY|(A=1,M,c)}  
\over
   \RR_{AU|(M,c)} +   \RR_{UY|(A=1,M,c)}   -1    }
   = \BF_{U|(M,c)}. \label{eq::ineq-3}
\end{eqnarray} 

In the above proof, we have inequalities at three places (\ref{eq::ratio-bound-max-over-m})--(\ref{eq::ineq-3}). First, (\ref{eq::ineq-3}) is attainable if $ \RR_{AU|(m,c)}$ and $ \RR_{UY|(A=1,m,c)} $ attain their maximum values at the same level of $m$. Second, (\ref{eq::ineq-2}) is attainable according to the sharpness of Lemma \ref{lemma:fundamental}. Third, (\ref{eq::ratio-bound-max-over-m}) is attainable, if $F(dm\mid A=0,c)$ has all mass on the value $m$ that attains the maximum  of $ \BF_{U|(m,c)} .$ Furthermore, the conditions for attaining these inequalities are compatible, implying that the bound in Theorem 1 is sharp. 
\end{proof}

\begin{proof}
[of Theorem 2]
Similar to (\ref{eq::nde-8}) in the proof of Theorem 1,  
\begin{eqnarray}
\NIE_{\RR|c} ^\true 
= \frac{ \int \int   \P(  Y= 1 \mid A=1, m,  c, u )  F(dm, du \mid A=1, c)     } 
{  \int \int  \P(Y = 1 \mid A=1,m,   c, u)   F(dm, du\mid A=0, c )   }     .
\label{eq::nie}
\end{eqnarray}
Because the conditional total effect can be decomposed as the product of the conditional natural direct and indirect effects on the risk ratio scale, formulas (\ref{eq::nde-8}) and (\ref{eq::nie}) imply
\begin{eqnarray}
\TE_{\RR|c}^\true &=& \NDE_{\RR|c} ^\true\times \NIE_{\RR|c} ^\true 
= \frac{ \int \int   \P(  Y= 1 \mid A=1, m,  c, u )  F(dm, du \mid A=1, c)     } 
{  \int \int  \P(Y = 1 \mid A=0,m,   c, u)   F(dm, du\mid A=0, c )   } \nonumber \\
&=& { \P(Y=1\mid A=1,c) \over  \P(Y=1\mid A=0,c)} 
= \TE_{\RR|c}^\obs =  \NDE_{\RR|c} ^\obs \times  \NIE_{\RR|c} ^\obs.
\label{eq::total-effect-product}
\end{eqnarray} 
Therefore, once we obtain the bound on the conditional natural direct effect, we can immediately obtain the bound on the conditional natural indirect effect. According to (\ref{eq::total-effect-product}) and Theorem 1,  
\begin{eqnarray*}
\NIE^\true_{\RR|c}   
=  \NIE_{\RR|c}^\obs \times \frac{ \NDE_{\RR|c}^\obs  }{ \NDE_{\RR|c}^\true  }
\geq   \NIE^\obs_{\RR|c}    \times \BF_{U|(M,c)}.
\end{eqnarray*}

The bound is sharp according to the proof of Theorem 1.
\end{proof}

%
%
%

\begin{proof}
[of Theorem 3]
We can write the conditional natural direct effect, $\NDE^\true_{\RD|c} $, as
\begin{eqnarray}  
&&  \P(  Y_{1M_0} = 1 \mid c ) -  \P(Y_{0M_0} = 1 \mid c)  \label{eq::rd-nde-p1}\\
&=&     \int  \P(Y=1\mid A=1,m,c)   F(dm\mid A=0,c)  
\Big/{  \int  \P(Y=1\mid A=1,m,c)   F(dm\mid A=0,c) \over  \P(  Y_{1M_0} = 1 \mid  c )  } \nonumber  \\
&&-  \P(Y  = 1 \mid A=0, c)  \label{eq::rd-nde-p2} \\
&=&  \int  \P(Y=1\mid A=1,m,c)   F(dm\mid A=0,c)  
\Big/{  \NDE^\obs_{\RR|c}    \over \NDE^\true_{\RR|c}  }
-  \P(Y  = 1 \mid A=0, c)\label{eq::rd-nde-p3}   \\
&\geq & \int  \P(Y=1\mid A=1,m,c)   F(dm\mid A=0,c)  / \BF_{U|(M,c)} 
-  \P(Y  = 1 \mid A=0, c)  . \label{eq::rd-nde-p4} 
\end{eqnarray}  
where (\ref{eq::rd-nde-p1}) is by the definition, (\ref{eq::rd-nde-p2}) follows from the proof of Theorem 1, (\ref{eq::rd-nde-p3}) follows from the proof of Theorem 1, and (\ref{eq::rd-nde-p4}) follows from Theorem 1.

The bound is sharp according to the proof of Theorem 1.
\end{proof}

%
%
%

\begin{proof}
[of Theorem 4]
Similar to (\ref{eq::nde-8}), (\ref{eq::nie}) and (\ref{eq::total-effect-product}) on the risk ratio scale,  
\begin{eqnarray}
\TE_{\RR|c}^\true =   \NIE^\true_{\RD|c}  + \NIE^\true_{\RD|c}  =   \P(Y=1\mid A=1,c) - \P(Y=1\mid A=0,c)  .
\label{eq::total-effect-sum}
\end{eqnarray}
Therefore, once we obtain the bound on the conditional natural direct effect, we can immediately obtain the bound on the conditional natural indirect effect. According to Theorem 2, 
\begin{eqnarray*}
&&\NIE^\true_{\RD|c} =  \TE_{\RD|c}^\true  -    \NDE^\true_{\RD|c}  \\
&\leq & \{  \P(Y=1\mid A=1,c) - \P(Y=1\mid A=0,c) \}  \\
&&- \left\{  \int  \P(Y=1\mid A=1,m,c)   F(dm\mid A=0,c)  / \BF_{U|(M,c)} 
-  \P(Y  = 1 \mid A=0, c)     \right\} \\
&=& \P(Y=1\mid A=1,c) -  \int  \P(Y=1\mid A=1,m,c)   F(dm\mid A=0,c)  / \BF_{U|(M,c)}.
\end{eqnarray*}

The bound is sharp according to the proof of Theorem 1.
\end{proof}

\begin{proof}
[of Theorem 5]
Theorem 1 is equivalent to 
\begin{equation}\label{eq::proof-cornfield}
\BF_{U|(M,c)}  = g\left\{  \RR_{AU|(M,c)},  \RR_{UY|(A=1,M,c)}  \right\}    
\geq   \NDE^\obs_{\RR|c}/   \NDE^\true_{\RR|c} ,
\end{equation}
where $g(x,y) = (xy)/(x+y-1)$ was defined in Lemma \ref{lemma::cornfield-increasing}.
According to Lemma \ref{lemma::cornfield-increasing}, $\BF_{U|(M,c)}$ is increasing in both $\RR_{AU|(M,c)}$ and $  \RR_{UY|(A=1,M,c)}$. Letting $\RR_{AU|(M,c)} $ in (\ref{eq::proof-cornfield}) go to infinity, we have 
$
 \RR_{UY|(A=1,M,c)}
\geq   \NDE^\obs_{\RR|c}/   \NDE^\true_{\RR|c} .
$ 
By symmetry, we also have
$
\RR_{AU|(M,c)}
\geq   \NDE^\obs_{\RR|c}/   \NDE^\true_{\RR|c} .
$
Let $\RR_{\max} = \max (  \RR_{AU|(M,c)},   \RR_{UY|(A=1,M,c)} ) $. According to Lemma \ref{lemma::cornfield-increasing}, we have
$$
\frac{  \RR_{\max}^2    }{ 2 \RR_{\max} - 1} \geq \frac{ \NDE^\obs_{\RR|c} } {   \NDE^\true_{\RR|c}  } .
$$
Solving the inequality of $\RR_{\max}$ above, we obtain the high threshold in Theorem 5.
\end{proof}

\section{The Second Sensitivity Parameter $\RR_{AU|(M,c)} $}

\subsection{A Lemma About Collider Bias}

\begin{lemma}
\label{lemma-collider-bias}
In a directed acyclic graph with vertices $(A,U,M)$, if $A\ind U$ and $M$ is a collider, then the risk ratio of $A$ on $U$ given $M=m$ satisfies
\begin{eqnarray*}
\RR_{AU|m} 
&=& 
\max_u  \frac{\PP(u\mid A=1, m)}{\PP(u\mid A=0,m)}
=
\max_u  \frac{\PP(A=1\mid u, m)}{\PP(A=0\mid u, m)} \Big/  \frac{\PP(A=1\mid  m)}{\PP(A=0\mid m)} \\
&\leq&
\max_{u\neq u'} \frac{ \PP(m\mid A=1,u) \PP(m\mid A=0, u')    }{ \PP(m\mid A=0,u) \PP(m\mid A=1, u')  } .
\end{eqnarray*} 
\end{lemma}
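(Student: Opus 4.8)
The plan is to establish the two equalities by elementary probability manipulations and then to obtain the inequality by exhibiting the middle quantity as a weighted average of the interaction ratios appearing on the right.

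First I would prove the equality $\max_u \PP(u\mid A=1,m)/\PP(u\mid A=0,m) = \max_u \{\PP(A=1\mid u,m)/\PP(A=0\mid u,m)\}\big/\{\PP(A=1\mid m)/\PP(A=0\mid m)\}$ via Bayes' rule. Writing $\PP(u\mid A=a,m)=\PP(A=a\mid u,m)\PP(u\mid m)/\PP(A=a\mid m)$, the common factor $\PP(u\mid m)$ cancels when forming the ratio of the $a=1$ and $a=0$ expressions, leaving exactly the conditional odds of $A$ given $(u,m)$ divided by the marginal odds of $A$ given $m$. Taking the maximum over $u$ gives the first displayed equality; this step is purely formal and uses neither $A\ind U$ nor the collider structure.

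To handle the inequality I would invoke the assumptions. Because $M$ is a collider with parents $A$ and $U$ and $A\ind U$, the joint law factorizes as $\PP(A=a,u,m)=\PP(m\mid A=a,u)\PP(A=a)\PP(u)$. Substituting this into both the conditional and the marginal odds of $A$, the factors $\PP(A=1)/\PP(A=0)$ and $\PP(u)$ cancel, so that, with $p_u=\PP(m\mid A=1,u)$, $q_u=\PP(m\mid A=0,u)$ and $w_{u'}=\PP(u')$, the middle expression becomes
\[
\frac{\PP(A=1\mid u,m)}{\PP(A=0\mid u,m)}\Big/\frac{\PP(A=1\mid m)}{\PP(A=0\mid m)}
= \frac{p_u}{q_u}\cdot\frac{\sum_{u'} q_{u'} w_{u'}}{\sum_{u'} p_{u'} w_{u'}}.
\]

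The crux, and the step I expect to require the most care to state cleanly, is to recognize this quantity as a weighted average. I would rewrite it as
\[
\frac{\sum_{u'} \{p_u q_{u'}/(q_u p_{u'})\}\, p_{u'} w_{u'}}{\sum_{u'} p_{u'} w_{u'}},
\]
a convex combination of the interaction ratios $p_u q_{u'}/(q_u p_{u'})$ with nonnegative weights $p_{u'} w_{u'}$. Since a weighted average never exceeds its largest term, this is at most $\max_{u'} p_u q_{u'}/(q_u p_{u'})$, and maximizing over $u$ bounds $\RR_{AU|m}$ by the maximum of the interaction ratios over all pairs $(u,u')$. Finally I would observe that the diagonal terms $u=u'$ equal $1$, while for any off-diagonal pair the ratio and its transpose are reciprocals, so $\max_{u\neq u'} p_u q_{u'}/(q_u p_{u'})\geq 1$ and hence coincides with the maximum over all pairs. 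This justifies restricting the maximum to $u\neq u'$ and completes the proof.
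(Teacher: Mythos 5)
Your proposal is correct and follows essentially the same route as the paper's proof: both apply Bayes' rule for the equalities, use $A\ind U$ and the collider factorization to reduce the middle quantity to $\frac{p_u}{q_u}\cdot\frac{\sum_{u'} q_{u'}w_{u'}}{\sum_{u'} p_{u'}w_{u'}}$, and bound it by the maximal interaction ratio --- your convex-combination step is just the explicit form of the paper's bound on a ratio of integrals by the supremum of the ratio of integrands. The only differences are cosmetic: the paper works with general measures $F(du')$ rather than sums, and you additionally spell out why the maximum may be restricted to $u\neq u'$, a point the paper leaves implicit.
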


\begin{proof}
[of Lemma \ref{lemma-collider-bias}]
Bayes' Theorem gives the first line of Lemma \ref{lemma-collider-bias} and 
\begin{eqnarray}\label{eq::bayes}
\RR_{AU|m} 
=
\max_u  \frac{\P(m\mid A=1, u) \P(u\mid A=1) /  \P(m\mid A=1) }{ \P(m\mid A=0, u) \P(u\mid A=0) /  \P(m\mid A=0)  } .
\end{eqnarray}
Applying $A\ind U$ and the law of total probability to \eqref{eq::bayes}, we have
\begin{eqnarray}\label{eq::total}
\RR_{AU|m} 
=
\max_u   \frac{\P(m\mid A=1, u)   /  \int  \P(m\mid A=1, u') F(du') }
{ \P(m\mid A=0, u) / \int  \P(m\mid A=0, u') F(du')  } .
\end{eqnarray}
We rearrange the terms in \eqref{eq::total}, and finally obtain
\begin{eqnarray*}
\RR_{AU|m} 
&=&
\max_u \left\{   \int   \frac{ \P(m\mid A=0, u') }{ \P(m\mid A=0, u)  }   F(du') \Big/
\int   \frac{ \P(m\mid A=1, u') }{ \P(m\mid A=1, u)  }    F(du') \right\} \\
&\leq& 
\max_u \max_{u'}   \left\{    \frac{ \P(m\mid A=0, u') }{ \P(m\mid A=0, u)  }   \Big/
   \frac{ \P(m\mid A=1, u') }{ \P(m\mid A=1, u)  }    \right\} \\
&=&
\max_{u\neq u'}      
 \frac{ \PP(m\mid A=1,u) \PP(m\mid A=0, u')    }{ \PP(m\mid A=0,u) \PP(m\mid A=1, u')  } .
\end{eqnarray*}
\end{proof}

\subsection{Under Some Parametric Assumptions}

The second sensitivity parameter can be rewritten as
$$
\RR_{AU|(M,c)} 
=  
\sup_m \sup_u  \frac{F(du\mid A=1, m,c)}{F(du\mid A=0, m,c)}
=
\max_m  \left\{  
\max_u  
\frac{F(dm\mid A=1, c, u)}{  F(dm\mid A=0, c, u)}
\Big /
\frac{F(dm\mid A=1, c )}{  F(dm\mid A=0, c)}
\right\}.
$$

Assume that $M$ is binary, and follows a log-linear model conditional on $(A,C,U)$:
$$
\P(M=1\mid a,c,u) = \exp\left\{   \beta_0 + \beta_1 a + \beta_2 c + \beta_3 u   \right\}.
$$

For simplicity, we assume that $U$ is a Bernoulli random variable with mean $1/2$ independent of $C$, as in \citet{Rosenbaum::1983JRSSB}, \citet{Lin::1998} and \citet{imbens2003sensitivity}. The cumulant generating function of $U$ is $K(t) = \log E(e^{t U}) = \log\{ (1+e^t) / 2\}$. Therefore, the marginal model of $M$ given $(A,C)$ also follows a log-linear model:
$$
\P(M=1\mid a,c) =  \exp\left\{   \beta_0 + K(\beta_3) + \beta_1 a + \beta_2 c \right\}.
$$

For $m=1$, we have $\RR_{AU|(M=1,c)} = 1$ because in the log-linear model the conditional and unconditional relative risks of $A$ on $M$ are both $\exp^{\beta_1}$. For $m=0$, we have
$$
\RR_{AU|(M=0,c)} = 
\max_u  
\frac{    1-e^{\beta_0  + \beta_1 + \beta_2 c + \beta_3 u}   }{    1-e^{\beta_0    + \beta_2 c + \beta_3 u}     }
\Big / 
\frac{    1-e^{\beta_0' + \beta_1 + \beta_2 c}   }{ 1- e^{\beta_0'   + \beta_2 c}} .
$$
We need to find the maximum value on the right-hand side.
Ignoring some positive constants,
$$
\frac{\partial   }{ \partial \beta_3 }
\left(
\frac{    1-e^{\beta_0  + \beta_1 + \beta_2 c + \beta_3 u}   }{    1-e^{\beta_0    + \beta_2 c + \beta_3 u}     }
\right)
\propto
\beta_3 (1- e^{\beta_1}).
$$
If $\beta_1 \beta_3 \geq 0$, then the above derivative is non-positive, and 
$$
\RR_{AU|(M=0,c)} = 
\frac{    1-e^{\beta_0  + \beta_1 + \beta_2 c  }   }{    1-e^{\beta_0    + \beta_2 c  }     }
\Big / 
\frac{    1-e^{\beta_0' + \beta_1 + \beta_2 c}   }{ 1- e^{\beta_0'    + \beta_2 c}} 
=
\frac{    1-e^{\beta_0' - K(\beta_3)  + \beta_1 + \beta_2 c  }   }{    1-e^{\beta_0' - K(\beta_3)    + \beta_2 c  }     }
\Big / 
\frac{    1-e^{\beta_0' + \beta_1 + \beta_2 c}   }{ 1- e^{\beta_0'   + \beta_2 c}} 
.
$$
If $\beta_1\beta_3 <0$, then the above derivative is positive, and 
$$
\RR_{AU|(M=0,c)} = 
\frac{    1-e^{\beta_0  + \beta_1 + \beta_2 c + \beta_3 }   }{    1-e^{\beta_0    + \beta_2 c + \beta_3 }     }
\Big / 
\frac{    1-e^{\beta_0' + \beta_1 + \beta_2 c}   }{ 1- e^{\beta_0'   + \beta_2 c}} 
=
\frac{    1-e^{\beta_0' +\beta_3 -  K(\beta_3)   + \beta_1 + \beta_2 c   }   }{    1-e^{\beta_0' +\beta_3 - K(\beta_3)     + \beta_2 c  }     }
\Big / 
\frac{    1-e^{\beta_0' + \beta_1 + \beta_2 c}   }{ 1- e^{\beta_0'    + \beta_2 c}} .
$$

Because $\beta_0', \beta_1$ and $\beta_2$ can be identified from the marginal model of $\P(M=1\mid a,c)$ by the observed data, the second sensitivity parameter
$
\RR_{AU|(M,c)} = \max\{ 1 , \RR_{AU|(M=0,c)}  \}
$
reduces to a function of $\beta_3$, the confounder-mediation association. In practice, we can estimate $\beta_0', \beta_1$ and $\beta_2$ from the observed data, choose a range of plausible values of $\beta_3$, and compute the corresponding values of the second sensitivity parameter.

Without loss of generality, below we consider the case without covariates $C$, because in practice our analysis is often conducted within strata of $C$. Tables \ref{tb1} and \ref{tb2} show how the values of $\RR_{AU|(M,c)} / \exp(\beta_3)$ and $\RR_{AU|(M,c)} / \exp(\beta_1)$ vary with $\beta_3$, for different combinations of $(\beta_0, \beta_1)$. In all cases, the ratios are smaller than unity, which verify \citet{Greenland::2003}'s statement that the second sensitivity parameter is often smaller than both the exposure-mediator and confounder-mediator associations. We have tried many other values of the regression coefficients, and find that this is true in general. Therefore, we can often use the exposure-mediator and confounder-mediator associations as the upper bound of the second sensitivity parameter.

\begin{table}[t]
\centering
\caption{Values of $\RR_{AU|(M,c)} / \exp(\beta_3)$ for different combinations of $(\beta_0, \beta_1)$ with columns corresponding to different values of $\beta_3$}\label{tb1}
\begin{tabular}{rrrrrrrr}
  \hline
$(\beta_0, \beta_1)$ & 0.1 & 0.2 & 0.3 & 0.4 & 0.5 & 0.6 & 0.7 \\ 
  \hline
$(-2.3.0.2)$ & 0.91 & 0.82 & 0.74 & 0.68 & 0.61 & 0.56 & 0.50 \\ 
$(-2, 0.2)$ & 0.91 & 0.82 & 0.75 & 0.68 & 0.62 & 0.56 & 0.51 \\ 
$(-2.3, 0.4)$ & 0.91 & 0.82 & 0.75 & 0.68 & 0.62 & 0.56 & 0.51 \\ 
$(-2, 0.4)$ & 0.91 & 0.83 & 0.75 & 0.69 & 0.63 & 0.57 & 0.52 \\ 
$(-2.3, 0.7)$ & 0.91 & 0.83 & 0.76 & 0.70 & 0.64 & 0.58 & 0.54 \\ 
$(-2, 0.7)$& 0.92 & 0.84 & 0.77 & 0.71 & 0.66 & 0.61 & 0.56 \\ 
   \hline
\end{tabular}
\end{table}

\begin{table}[t]
\centering
\caption{Values of $\RR_{AU|(M,c)} / \exp(\beta_1)$ for different combinations of $(\beta_0, \beta_1)$ with columns corresponding to different values of $\beta_3$}\label{tb2}
\begin{tabular}{rrrrrrrr}
  \hline
$(\beta_0, \beta_1)$ & 0.1 & 0.2 & 0.3 & 0.4 & 0.5 & 0.6 & 0.7 \\ 
  \hline
$(-2.3.0.2)$& 0.82 & 0.82 & 0.82 & 0.82 & 0.83 & 0.83 & 0.83 \\ 
$(-2, 0.2)$ & 0.82 & 0.82 & 0.82 & 0.83 & 0.83 & 0.83 & 0.84 \\ 
$(-2.3, 0.4)$& 0.67 & 0.68 & 0.68 & 0.68 & 0.69 & 0.69 & 0.69 \\ 
$(-2, 0.4)$  & 0.67 & 0.68 & 0.68 & 0.69 & 0.69 & 0.70 & 0.71 \\ 
$(-2.3, 0.7)$ & 0.50 & 0.50 & 0.51 & 0.52 & 0.52 & 0.53 & 0.54 \\ 
$(-2, 0.7)$ & 0.50 & 0.51 & 0.52 & 0.53 & 0.54 & 0.55 & 0.56 \\ 
   \hline
\end{tabular}
\end{table}

\section{Extensions}

\subsection{Extension of Theorem 5 Based on Theorem 3}

\begin{theorem} 
\label{thm::cornfield-RD}
Under the assumptions in (6) and (7), the bounding factor must exceed
\begin{eqnarray}
\label{eq::cornfield-rd}
 \BF_{U|(M,c)}  \geq
 \frac{  \int  \PP(Y=1\mid A=1,m,c)   F(dm\mid A=0,c)      } {    \NDE^\true_{\RD|c} +    \PP(Y  = 1 \mid A=0, c) }.
\end{eqnarray}
Denote the right-hand side of (\ref{eq::cornfield-rd}) by $\Delta$. Then both $\RR_{AU|(M,c)}$ and $ \RR_{UY|(A=1,M,c)} $ must exceed $\Delta$, and the largest of them must exceed
$
\Delta + \left\{\Delta (\Delta - 1) \right\}^{1/2} . 
$
\end{theorem}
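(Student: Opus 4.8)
The plan is to obtain the first inequality by a one-line algebraic rearrangement of Theorem 3, and then to reproduce the argument of Theorem 5 with the ratio $\NDE^\obs_{\RR|c}/\NDE^\true_{\RR|c}$ replaced throughout by $\Delta$. First I would take the bound of Theorem 3 and move $\PP(Y=1\mid A=0,c)$ to the left-hand side, giving
$$
\NDE^\true_{\RD|c} + \PP(Y=1\mid A=0,c) \ \geq\ \frac{\int \PP(Y=1\mid A=1,m,c)\,F(dm\mid A=0,c)}{\BF_{U|(M,c)}}.
$$
The left-hand side equals the potential-outcome probability $\PP(Y_{1M_0}=1\mid c)$ and is therefore strictly positive, while the numerator on the right is positive as well; so I may multiply through by $\BF_{U|(M,c)}$ and divide by the left-hand side to arrive at $\BF_{U|(M,c)} \geq \Delta$, which is (\ref{eq::cornfield-rd}). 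The only thing needing attention at this stage is that the denominator of $\Delta$ is positive, which I would justify by identifying it with $\PP(Y_{1M_0}=1\mid c)$.

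Next I would invoke the factorization $\BF_{U|(M,c)} = g(\RR_{AU|(M,c)}, \RR_{UY|(A=1,M,c)})$ with $g(x,y)=xy/(x+y-1)$, exactly as in the proof of Theorem 5. By Lemma \ref{lemma::cornfield-increasing}, $g$ is increasing in each argument on $\{x,y>1\}$. Letting $\RR_{AU|(M,c)}\to\infty$ in $g(\RR_{AU|(M,c)}, \RR_{UY|(A=1,M,c)}) \geq \Delta$ yields $\RR_{UY|(A=1,M,c)} \geq \Delta$, and the symmetry of $g$ gives the same bound for $\RR_{AU|(M,c)}$. Writing $\RR_{\max} = \max(\RR_{AU|(M,c)}, \RR_{UY|(A=1,M,c)})$, monotonicity yields $g(\RR_{AU|(M,c)}, \RR_{UY|(A=1,M,c)}) \leq g(\RR_{\max}, \RR_{\max}) = \RR_{\max}^2/(2\RR_{\max}-1)$, so that $\RR_{\max}^2/(2\RR_{\max}-1) \geq \Delta$.

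Finally I would clear the (positive) denominator $2\RR_{\max}-1$ and solve the quadratic inequality $\RR_{\max}^2 - 2\Delta\,\RR_{\max} + \Delta \geq 0$, whose roots are $\Delta \pm \{\Delta(\Delta-1)\}^{1/2}$. The single genuinely delicate step is the branch selection: since I have already established $\RR_{\max} \geq \Delta$, and $\Delta > \Delta - \{\Delta(\Delta-1)\}^{1/2}$, the smaller root is ruled out and the inequality forces $\RR_{\max} \geq \Delta + \{\Delta(\Delta-1)\}^{1/2}$, the claimed threshold. This branch argument relies on the implicit regime $\Delta>1$, equivalently $\NDE^\obs_{\RD|c} > \NDE^\true_{\RD|c}$, under which $\{\Delta(\Delta-1)\}^{1/2}$ is real and strictly positive; I expect this to be the one point requiring explicit comment, since everything else is a faithful transcription of the proof of Theorem 5.
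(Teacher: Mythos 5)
Your proposal is correct and takes exactly the paper's route: the paper's entire proof of this theorem is the single sentence that the results ``follow from Theorem 3 and the proof of Theorem 5,'' and your rearrangement of Theorem 3 into $\BF_{U|(M,c)} \geq \Delta$ followed by a verbatim rerun of the Theorem 5 argument (monotonicity of $g(x,y)=xy/(x+y-1)$, the limit $\RR_{AU|(M,c)}\to\infty$, and the quadratic $\RR_{\max}^2/(2\RR_{\max}-1)\geq\Delta$) is precisely that intended argument. Your explicit justifications --- identifying the denominator of $\Delta$ with $\PP(Y_{1M_0}=1\mid c)>0$ before clearing it, and noting that the branch selection and realness of $\{\Delta(\Delta-1)\}^{1/2}$ presuppose the regime $\Delta>1$, i.e., $\NDE^\obs_{\RD|c}>\NDE^\true_{\RD|c}$ --- only make rigorous what the paper leaves tacit.
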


\begin{proof}
[of Theorem \ref{thm::cornfield-RD}]
The results follow from Theorem 3 and the proof of Theorem 5.
\end{proof}

When there is no conditional natural direct effect, i.e., $ \NDE^\true_{\RD|c} =0$ or equivalently $\NDE^\true_{\RR|c} =1$, Theorems 5 and \ref{thm::cornfield-RD} give the same Cornfield-type inequality.

\subsection{Time-to-Event Outcome With Rare Events}
Let $\lambda(t\mid x)$ be the hazard rate of the time-to-event outcome $Y$ at time $t$ conditional on some random variable $X.$
For time-to-event outcome, we need to modify the definition of confounder-outcome association given the exposure level $A=1$ and the mediator as
\begin{eqnarray}
\label{eq::hazard-ratio}
\HR_{UY|(A=1,M, c)} (t)= \sup_m 
{   \sup_{ u} \lambda(  t \mid A=1, m,  c, u )   \over \inf_{ u}  \lambda(  t \mid A=1, m,  c, u )  }
\end{eqnarray}
and correspondingly the bounding factor as
$$
\BF_{U|(M,c)} (t)= 
\frac{ \RR_{AU|(M,c)}  \times \HR_{UY|(A=1,M,c)} (t)   }
{  \RR_{AU|(M,c)}  +  \HR_{UY|(A=1,M,c)} (t)  - 1}.
$$

If we have rare events at the end of the study, the hazard ratio approximately satisfies 
$
\lambda(t) \approx \int \lambda(t\mid x) F(dx),
$
where $F(dx)$ is the distribution of random variable $X$ \citep{Vanderweele::2013}. This approximate linearity property is sufficient for us to manipulate hazard rates as probabilities in all our proofs above. Consequently, all the conclusions in \S 4 hold if we replace risk ratios by hazard ratios as in (\ref{eq::hazard-ratio}), and also replace risk differences by hazard differences.

\subsection{Positive Outcome}
For a general positive outcome, we need to modify the definition of confounder-outcome association given the exposure level $A=1$ and the mediator as
\begin{eqnarray}
\label{eq::mean-ratio}
\MR_{UY|(A=1,M, c)} = 
\sup_m
{   \sup_{ u} E(  Y= 1 \mid A=1, m,  c, u )   \over \inf_{ u}  E(  Y= 1 \mid A=1, m,  c, u )  }
\end{eqnarray}
and correspondingly the bounding factor as
$$
\BF_{U|(M,c)} = 
\frac{ \RR_{AU|(M,c)}  \times \MR_{UY|(A=1,M,c)}    }
{  \RR_{AU|(M,c)}  +  \MR_{UY|(A=1,M,c)}   - 1}.
$$

All the conclusions in \S 4 hold if we replace risk ratios by mean ratios as in (\ref{eq::mean-ratio}), and also replace causal risk differences by average causal effects.

\subsection{Without Conditional Independence of $A$ and $U$ Given $C$}

If we drop the assumption $A\ind U\mid C$, then Theorems 1 and 3 still hold if we replace the conditional natural direct effects $\NDE_{\RR|c}^\true$ and $\NDE_{\RD|c}^\true$ by the ones for the unexposed population:
\begin{eqnarray*}
\NDE_{\RR|(A=0, c)}^\true 
&=&
\frac{  \P(  Y_{1M_0} = 1 \mid A=0,  c ) } {  \P(Y_{0M_0} = 1 \mid A=0, c) },\\
\NDE^\true_{\RD|(A=0,c)}
&=&  
\P(  Y_{1M_0} = 1 \mid A=0,  c ) -  \P(Y_{0M_0} = 1 \mid A=0, c)  .
\end{eqnarray*}
We formally state this as a theorem.

\begin{theorem}
\label{thm::unexposed-population}
Under the assumptions in (6), we have the sharp bounds 
$$
\NDE^\true_{\RR| (A=0, c)}  \geq   \NDE^\obs_{\RR|c}/  \BF_{U|(M,c)},
$$
and
$$
\NDE^\true_{\RD| (A=0,c)}  
\geq  \sum_m \PP(Y=1\mid A=1,m,c)   \PP(m\mid A=0,c)  / \BF_{U|(M,c)} 
-  \PP(Y  = 1 \mid A=0, c).
$$
\end{theorem}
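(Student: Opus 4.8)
The plan is to exploit the fact that assumption \eqref{eq::randomization-treatment} enters the proofs of Theorems 1 and 3 at exactly one place. In the derivation of the true effect, $A\ind U\mid C$ is used only at the step \eqref{eq::nde-7}, to replace the averaging measure $F(du\mid c)$ by $F(du\mid A=0,c)$; every subsequent manipulation---the reduction to a supremum over $m$ in \eqref{eq::ratio-bound-max-over-m}, the application of Lemma \ref{lemma:fundamental} in \eqref{eq::ineq-2}, and the monotonicity argument in \eqref{eq::ineq-3}---acts on the post-replacement expression and never again refers to \eqref{eq::randomization-treatment}. I would therefore re-run the identification of the true effect while conditioning on $A=0$ from the outset, so that the measure $F(du\mid A=0,c)$ arises on its own and \eqref{eq::randomization-treatment} is never invoked.

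Concretely, I would first record the within-stratum identity
\[
\P(Y_{aM_0}=1\mid c,u)=\int \P(Y=1\mid A=a,m,c,u)\,F(dm\mid A=0,c,u),
\]
which is exactly what the chain \eqref{eq::nde-2}--\eqref{eq::nde-6} establishes for each fixed $u$ using only the independences in \eqref{eq::pearl-assume-U}: consistency, the single-world relations $Y_{am}\ind A\mid(C,U)$ and $Y_{am}\ind M\mid(A,C,U)$, the relation $M_a\ind A\mid(C,U)$, and the cross-world relation $Y_{am}\ind M_{a^*}\mid(C,U)$. Averaging this identity against $F(du\mid A=0,c)$ then yields
\[
\P(Y_{aM_0}=1\mid A=0,c)=\int\!\!\int \P(Y=1\mid A=a,m,c,u)\,F(dm\mid A=0,c,u)\,F(du\mid A=0,c),
\]
which is precisely the expression \eqref{eq::nde-8} that the proof of Theorem 1 reaches only after invoking \eqref{eq::randomization-treatment}. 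In particular the denominator ($a=0$) collapses by composition ($Y_{0M_0}=Y_0$) and consistency to $\P(Y=1\mid A=0,c)$, matching the observed denominator in \eqref{eq::nde-obs3}. Hence the ratio $\NDE^\obs_{\RR|c}/\NDE^\true_{\RR|(A=0,c)}$ is literally the ratio bounded in Theorem 1, and the first bound follows by reusing \eqref{eq::ratio-bound-max-over-m}--\eqref{eq::ineq-3} together with Lemma \ref{lemma::cornfield-increasing} verbatim. For the risk-difference bound I would mirror the proof of Theorem 3: write $\P(Y_{1M_0}=1\mid A=0,c)$ as $\{\int \P(Y=1\mid A=1,m,c)\,F(dm\mid A=0,c)\}$ divided by $\NDE^\obs_{\RR|c}/\NDE^\true_{\RR|(A=0,c)}$, bound that divisor by $\BF_{U|(M,c)}$, and subtract $\P(Y=1\mid A=0,c)=\P(Y_{0M_0}=1\mid A=0,c)$.

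The step I expect to be the main obstacle is the passage from the within-stratum identity to its $A=0$-conditional average, which requires $\P(Y_{aM_0}=1\mid A=0,c,u)=\P(Y_{aM_0}=1\mid c,u)$, i.e. the composite independence $Y_{aM_0}\ind A\mid(C,U)$. This does not follow formally from the two marginal relations $Y_{am}\ind A\mid(C,U)$ and $M_0\ind A\mid(C,U)$ taken in isolation, so I would justify it within each stratum $(C,U)=(c,u)$ from the joint independence of the potential-outcome process and $A$ that \eqref{eq::pearl-assume-U} encodes, with the cross-world relation gluing $Y_{am}$ and $M_0$ so that the composite $Y_{aM_0}$ inherits independence of $A$ given $(C,U)$. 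Once this is secured, sharpness of both bounds follows from the same constructions used for Theorem 1---a binary $U$, a degenerate $F(dm\mid A=0,c)$, and the two sensitivity parameters attaining their suprema at a common level of $m$---now read off within the subpopulation with $A=0$.
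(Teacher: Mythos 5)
Your overall route is the same as the paper's: re-derive the true natural direct effect within the unexposed population so that the averaging measure $F(du\mid A=0,c)$ arises on its own, observe that the resulting expression coincides with \eqref{eq::nde-8}, and then reuse the bounding chain \eqref{eq::ratio-bound-max-over-m}--\eqref{eq::ineq-3} for the risk ratio and the proof of Theorem 3 for the risk difference; the paper's proof does exactly this, arriving at \eqref{eq::nde-7-a0}. The one substantive soft spot is the step you yourself flag, and there your patch over-claims: you appeal to ``the joint independence of the potential-outcome process and $A$ that \eqref{eq::pearl-assume-U} encodes,'' but \eqref{eq::pearl-assume-U} states only four marginal conditional independences, and the pairwise relations $Y_{am}\ind A\mid (C,U)$, $M_0\ind A\mid (C,U)$ and $Y_{am}\ind M_{0}\mid (C,U)$ do not entail the composite relation $Y_{aM_0}\ind A\mid (C,U)$. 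Invoking a joint-independence reading is therefore a genuine strengthening of the stated hypotheses, not a derivation from them.

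The gap is closable within \eqref{eq::pearl-assume-U}, and the closure is precisely how the paper proceeds: condition on $A=0$ from the very first application of the law of total probability, expanding over $F_{(M_0,U)}(dm,du\mid A=0,c)$ as in \eqref{eq::nde-2-a0}. On the event $A=0$, consistency gives $M_0=M$, so conditioning on $\{A=0, M_0=m\}$ is the same as conditioning on $\{A=0, M=m\}$, and the two single-world relations $Y_{am}\ind M\mid (A,C,U)$ and $Y_{am}\ind A\mid (C,U)$ strip the conditioning: $\P(Y_{am}=1\mid A=0, M_0=m, c,u)=\P(Y_{am}=1\mid c,u)=\P(Y=1\mid A=a,m,c,u)$, the last equality by reintroducing $\{A=a,M=m\}$ and consistency; likewise $F_{M_0}(dm\mid A=0,c,u)=F(dm\mid A=0,c,u)$. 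This lands directly on \eqref{eq::nde-7-a0}, which is \eqref{eq::nde-8}, with no composite independence ever needed --- and, notably, with no use of the cross-world assumption $Y_{am}\ind M_{a^*}\mid (C,U)$, whereas your ordering (first the within-$(c,u)$ identity for $\P(Y_{aM_0}=1\mid c,u)$, then averaging against $F(du\mid A=0,c)$) both uses cross-world independence and manufactures the composite-independence demand. Your needed equality $\P(Y_{aM_0}=1\mid A=0,c,u)=\P(Y_{aM_0}=1\mid c,u)$ is in fact true under \eqref{eq::pearl-assume-U}, by expanding both sides over $M_0$, using $M_0\ind A\mid (C,U)$, and handling the $A=0$ side by the consistency trick above (the marginal side is where cross-world enters), so your proof can be repaired without new assumptions --- but the repair is the paper's argument, and it shows the theorem holds under weaker hypotheses than your patch requires. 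Everything else in your proposal --- locating the unique use of \eqref{eq::randomization-treatment} at \eqref{eq::nde-7}, the collapse of the denominator by composition and consistency, the verbatim reuse of Lemma \ref{lemma:fundamental} and Lemma \ref{lemma::cornfield-increasing}, the Theorem 3 mirror for the risk difference, and the carry-over of sharpness --- matches the paper.
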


\begin{proof}
[of Theorem \ref{thm::unexposed-population}]
The conditional natural direct effect for the unexposed population is
\begin{eqnarray}
&&\NDE^\true_{\RR|(A=0,c)}  \nonumber  \\
&=&  \frac{ \int \int   \P(  Y_{1M_0} = 1 \mid A=0,  M_0 = m,  c, u )  F_{(M_0, U)} ( dm, du\mid A=0, c  ) } 
{  \int \int  \P(Y_{0M_0} = 1 \mid A=0, M_0 =m,  c, u)  F_{(M_0, U)} ( dm, du\mid A=0, c  )   }  
\label{eq::nde-2-a0}\\
&=& \frac{ \int \int   \P(  Y_{1m} = 1 \mid  A=0, m,  c, u )  F_{(M_0, U)} ( dm, du\mid A=0,c  ) } 
{  \int \int  \P(Y_{0m} = 1 \mid A=0,m,  c, u)  F_{(M_0, U)} ( dm, du\mid A=0,c  )   }  
\label{eq::nde-3-a0}\\
&=& \frac{ \int \int   \P(  Y_{1m} = 1 \mid    c, u )  F_{M_0}(dm\mid A=0, c,u)   F ( du\mid A=0,c  ) } 
{  \int \int  \P(Y_{0m} = 1 \mid    c, u)   F_{M_0}(dm\mid A=0, c,u)   F ( du\mid A=0, c  )  }  
\label{eq::nde-4-a0}\\
&=& \frac{ \int \int   \P(  Y= 1 \mid A=1, m,  c, u )  F(dm\mid A=0, c,u)   F ( du\mid A=0 , c  ) } 
{  \int \int  \P(Y = 1 \mid A=0,m,   c, u)   F(dm\mid A=0, c,u)   F ( du\mid A=0, c  )   }  
\label{eq::nde-6-a0}\\
&=& \frac{ \int \int   \P(  Y= 1 \mid A=1, m,  c, u )  F(dm, du\mid A=0, c) } 
{  \int \int  \P(Y = 1 \mid A=0,m,   c, u)   F(dm, du\mid A=0, c)    }  
\label{eq::nde-7-a0} ,
\end{eqnarray}
where (\ref{eq::nde-2-a0}) follows from the definition and the law of total probability, (\ref{eq::nde-3-a0}) follows from consistency, (\ref{eq::nde-4-a0}) follows from $Y_{am} \ind A\mid (C,U)$ and $Y_{am} \ind M\mid (A,C,U)$, and the definition of the joint distribution of $(M_0, U)$, (\ref{eq::nde-6-a0}) follows from $Y_{am} \ind A\mid (C,U)$, $Y_{am} \ind M\mid (A,C,U)$ and $M_0\ind A\mid (C,U)$, and (\ref{eq::nde-7-a0}) follows from the definition of the joint distribution of $(M,U)$.

Formula (\ref{eq::nde-7-a0}) is the same as (\ref{eq::nde-8}), and therefore the result in Theorem 1 for $\NDE^\true_{\RR|c} $ also holds for $\NDE^\true_{\RR|(A=0,c)} .$
The discussion for the risk difference scale is analogous.
\end{proof}

\subsection{Decision-Theoretic Framework for Mediation}

The decision-theoretic framework \citep{Didelez::2006, Geneletti::2007} formalizes causal inference by a directed acyclic graph with regime indicators, e.g., Figure \ref{fg::DAG-decision} represents a graph for mediation analysis, where $W$ can be viewed as either $C$ or $(C,U)$.

\begin{figure}[t]
\centering
$$
\begin{xy}
\xymatrix{
& &  W \ar[dll] \ar[drr] \ar[d] \\
A \ar[rr] \ar@/_1.5pc/[rrrr] & & M\ar[rr] & & Y\\
*+[F]{ \sigma_A}  \ar[u] &&  *+[F]{\sigma_M}  \ar[u]
}
\end{xy}
$$
\caption{Directed acyclic graph with regime indicators.}\label{fg::DAG-decision}
\end{figure}
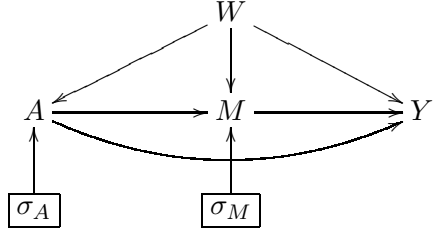

We assume that the conditional independencies among the variables of interest can be represented in a directed acyclic graph, i.e., the joint distribution satisfies the Markov properties. For mediation analysis, we assume that $\sigma_A$, the regime indicator for an intervention on $A$, takes values in $\{\phi, 0, 1  \}$. The no intervention regime $\sigma_A =\phi$ corresponds to observational data, and the atomic intervention $\sigma_A =a$ fixes $A$ at level $a$, for $a=0,1.$ Mathematically, we have
\begin{eqnarray*}
\P( A=a\mid   \pa_A; \sigma_A  = \phi ) = \P(A=a\mid \pa_A),\quad
\P( A=a\mid   \pa_A; \sigma_A  = a^* ) = I(a=a^*).
\end{eqnarray*}
In addition to the values $\phi$ and $m$ for all possible values of $M$, the regime indicator $\sigma_M$ for the mediator also takes the value $G_{a|c}$, a random intervention meaning 
$$
F(dm  \mid \pa_M; \sigma_M = G_{a|c}) = F(dm \mid c; \sigma_A = a, \sigma_M = \phi)\quad 
(a=0,1).
$$
We discuss only the conditional natural direct effect on the risk ratio scale, because the results for conditional natural indirect effect and the risk difference scale are analogous. We define
$$
\NDE_{\RR|c}^\true
=
{ \P(Y=1\mid c; \sigma_A=1, \sigma_M = G_{0|c}) \over \P(Y=1\mid c; \sigma_A=0, \sigma_M = G_{0|c})} . 
$$

We assume $W=C$ and the following sufficient conditions for identification:
\begin{eqnarray*}
C\ind (\sigma_A, \sigma_M),\quad 
Y\ind \sigma_A\mid (A,C; \sigma_M), \quad 
Y\ind \sigma_M\mid (A, M,C; \sigma_A),\quad 
M \ind \sigma_A\mid (A, C).
\end{eqnarray*}
Under the above conditions, we have
\begin{eqnarray}
&&\P(Y=1\mid c; \sigma_A=a, \sigma_M = G_{a^*|c}) \nonumber \\
&=& \P(Y=1\mid a ,c; \sigma_A=a, \sigma_M = G_{a^*|c})  \label{eq::decision-1} \\
&=& \P(Y=1\mid a ,c;  \sigma_M = G_{a^*|c}) \label{eq::decision-2} \\
&=& \int  \P(Y=1\mid a,m ,c;  \sigma_M = G_{a^*|c})  F(dm\mid a,c;  \sigma_M = G_{a^*|c}) \label{eq::decision-3} \\ 
&=& \int  \P(Y=1\mid a,m ,c )  F(dm\mid c; \sigma_A = a^*, \sigma_M = \phi) \label{eq::decision-4} \\ 
&=& \int  \P(Y=1\mid a,m ,c )  F(dm\mid a^*, c ), \label{eq::decision-5}
\end{eqnarray}
where
(\ref{eq::decision-1}) follows from the definition of $\sigma_A$,
(\ref{eq::decision-2}) follows from $Y\ind \sigma_A\mid (A,C; \sigma_M)$,
(\ref{eq::decision-3}) follows from the law of total probability,
(\ref{eq::decision-4}) follows from $Y\ind \sigma_M\mid (A, M,C; \sigma_A)$ and the definition of $\sigma_M = G_{a^*|c}$,
and (\ref{eq::decision-5}) follows from $M \ind \sigma_A\mid (A, C)$. Therefore, (\ref{eq::decision-5}) implies that the observed version of $\NDE_{\RR|c}^\true$ is
\begin{eqnarray}
\NDE_{\RR|c}^\obs
=
{  \int  \P(Y=1\mid A=1,m ,c )  F(dm\mid A=0, c )
\over 
\int  \P(Y=1\mid A=0,m ,c )  F(dm\mid A=0, c ) }.
\label{eq::nde-obs-decision}
\end{eqnarray}

We assume $W=(C,U)$ and the following conditions for sensitivity analysis:
\begin{eqnarray*}
&A\ind  U\mid C,\quad 
(C,U)\ind (\sigma_A, \sigma_M),\\ 
&Y\ind \sigma_A\mid (A,C,U; \sigma_M), \quad 
Y\ind \sigma_M\mid (A, C,U,W; \sigma_A),\quad 
M \ind \sigma_A\mid (A, C,U).
\end{eqnarray*}
Using similar reasoning as in (\ref{eq::decision-1})--(\ref{eq::decision-5}), we have
\begin{eqnarray}
&&\P(Y=1\mid c; \sigma_A=a, \sigma_M = G_{a^*|c}) \nonumber \\
&=&\int  \P(Y=1\mid a ,c, u; \sigma_A=a, \sigma_M = G_{a^*|c}) 
F(du\mid a,c; \sigma_A=a, \sigma_M = G_{a^*|c})  \label{eq::decision-1-u} \\
&=&\int  \P(Y=1\mid a ,c, u; \sigma_M = G_{a^*|c}) 
F(du\mid c; \sigma_A=a, \sigma_M = G_{a^*|c})  \label{eq::decision-2-u} \\
&=&\int \int  \P(Y=1\mid a, m ,c, u; \sigma_M = G_{a^*|c})  
F(dm\mid a, c,u; \sigma_M = G_{a^*|c})
F(du\mid c )  \label{eq::decision-3-u} \\
&=&\int \int  \P(Y=1\mid a, m ,c, u )  
F(dm\mid c,u; \sigma_A = a^*,  \sigma_M = \phi)
F(du\mid c )  \label{eq::decision-4-u} \\
&=&\int \int  \P(Y=1\mid a, m ,c, u )  
F(dm\mid a^*, c,u)
F(du\mid c )  \label{eq::decision-5-u},
\end{eqnarray}
where
(\ref{eq::decision-1-u}) follows from the law of total probability,
(\ref{eq::decision-2-u}) follows from $Y\ind \sigma_A\mid (A,C,U; \sigma_M)$ and the definition of $\sigma_A$,
(\ref{eq::decision-3-u}) follows from the law of total probability and $(C,U)\ind (\sigma_A, \sigma_M)$,
(\ref{eq::decision-4-u}) follows from $Y\ind \sigma_M\mid (A, C,U,W; \sigma_A)$ and the definition of $\sigma_M= G_{a^*|c}$,
and (\ref{eq::decision-5-u}) follows from $M \ind \sigma_A\mid (A, C,U).$
Therefore,
\begin{eqnarray}
\NDE_{\RR|c}^\true
=
{\int \int  \P(Y=1\mid A=1, m ,c, u )  
F(dm\mid A=0, c,u)
F(du\mid c )
\over 
\int \int  \P(Y=1\mid A=0, m ,c, u )  
F(dm\mid A=0, c,u)
F(du\mid c )
}.
\label{eq::true-nde-decision}
\end{eqnarray}

Because the empirical formulas (\ref{eq::nde-obs-decision}) and (\ref{eq::true-nde-decision}) for $\NDE_{\RR|c}^\true$ based on $C$ or $(C,U)$ are both the same as the ones under the potential outcomes framework in the main text, all our results also apply to mediation analysis under the decision-theoretic framework.

\end{document}